\definecolor{bordeau}{rgb}{0.5,0,0}
\definecolor{pslblue}{RGB}{36, 56, 141}
\providecommand{\E}{\mathbb E}
\providecommand{\ed}{\mathrm e}
\providecommand{\proba}{\mathbb P}
\providecommand{\N}{\mathbb N}
\providecommand{\R}{\mathbb R}
\providecommand{\D}{\mathbb D}
\providecommand{\C}{\mathbb C}
\newcommand{\ddiff}{\mathrm d}
\numberwithin{equation}{section}
\newtheorem{theorem}{Theorem}[section]
\newtheorem{proposition}[theorem]{Proposition}
\newtheorem{corollary}[theorem]{Corollary}
\newtheorem{lemma}[theorem]{Lemma}
\theoremstyle{definition}
\newtheorem{definition}[theorem]{Definition}
\newtheorem{remark}[theorem]{Remark}
\title[Asymptotic analysis of the characteristic polynomial]
{Asymptotic analysis of the characteristic 
polynomial for the Elliptic Ginibre Ensemble}
\author{Quentin François, David Garc\'ia-Zelada}
\address
{Quentin François: CEREMADE, CNRS, Université Paris-Dauphine, Université PSL, 75016 Paris, France
\& DMA, École normale supérieure, Université PSL, CNRS, 75005 Paris, France.}
\email{\href{mailto:quentin.francois@dauphine.psl.eu}{quentin.francois@dauphine.psl.eu}}
\address{
David Garc\'ia-Zelada: Laboratoire de Probabilités, Statistique et Modélisation, 
UMR CNRS 8001, 
Sorbonne Université, 4 Place Jussieu, 75005 Paris, France.}
\email{\href{mailto:david.garcia-zelada@sorbonne-universite.fr}{david.garcia-zelada@sorbonne-universite.fr}}
\begin{document}

\begin{abstract}
We consider the complex Elliptic Ginibre Ensemble,
a family of random matrix models introduced by Girko that interpolates between
the Ginibre Ensemble and the Gaussian Unitary Ensemble
and such that its empirical spectral measure converges
to the uniform measure on an ellipse. We show
the convergence in law of its normalised characteristic polynomial 
outside of this ellipse. Our proof contains two main steps.
We first show the tightness of the normalised characteristic polynomial 
using the link between the Elliptic Ginibre Ensemble and Hermite polynomials. 
This part relies on the uniform control of the Hermite kernel which is derived 
from the recent work of Akemann, Duits and Molag. 
In the second step, we identify the limiting object as the exponential of a 
Gaussian analytic function. 
The limit expression is derived from the 
convergence of traces of Chebyshev polynomials of 
random matrices by the method of moments. 
These traces of Chebyshev polynomials appear naturally 
as a kind of centered version, or \emph{normal ordering}, 
of the traces of the monomials.
This work answers the interpolation problem raised in the work of Bordenave, Chafaï 
and the second author of this paper for the integrable case of the Elliptic Ginibre Ensemble 
and is therefore a fist step towards the conjectured universality of this result.
\end{abstract}

\maketitle
\section{Introduction and main result}

\subsection{The model of the Elliptic Ginibre Ensemble (EGE)}

The random matrices that we consider in this paper are sampled from the complex 
 Elliptic Ginibre Ensemble introduced by Girko in \cite{Girko_elliptic}. 
This model is parameterized by $t \in [0,1]$ and interpolates between the Ginibre Ensemble 
and the Gaussian Unitary Ensemble (GUE) for $t=0$ and $t=1$ respectively. 
A concise review of this model can be found in \cite{Khoruzhenko_Sommers}.
Its  law is the one
of a random matrix given by the following construction.
\\
\\
Recall that a matrix sampled from the Gaussian Unitary Ensemble
is a Hermitian random matrix whose density is proportional to
$e^{-\mathrm{Tr}(M^2)/2}$.
Consider $X_n$ and $Y_n$ independent random matrices sampled 
from the Gaussian Unitary Ensemble of size $n \geq 1$.
The law of the Elliptic Ginibre Ensemble at $t \in [0,1]$ is the law of the matrix
\begin{equation}
\label{eq:elliptic_sum_gue}
    A_{n,t} = \sqrt{\frac{1+t}{2}}X_n + i \sqrt{\frac{1-t}{2}}Y_n, 
\end{equation} 
where $i$ is the imaginary unit. Equivalently,  $A_{n,t}$ 
has a law  proportional to
\begin{equation}
    \exp \left( - \frac{1}{1-t^2} \mathrm{Tr} \left[M^*M - \frac{t}{2} (M^2 + (M^*)^2) \right] \right) \ddiff M,
\end{equation}
where $\ddiff M = \prod_{1 \leq i,j \leq n} \ddiff M_{ij}$ is the product 
Lebesgue measure on the entries of the matrix, see \cite[Eq. (4)]{Akemann_Duits_Molag}. 
Notice that $A_{n,t}$ could also be defined as a
	centered complex Gaussian
	matrix whose entries $a_{ij}$ satisfy 
	$\E[|a_{ij}|^2] = 1$, 
	$\E[a_{ij}a_{ji}] = t$ for every $i,j$, and for $i \neq j$
	$\E[a_{ij} \bar a_{ji}] = \E[a_{ij}^2] = 0$
	while covariance between $a_{ij}$ and $a_{i'j'}$
	are zero if $\{i,j\} \neq \{i',j'\}$. Moreover, 
	for $i\neq j$, $ \E[(a_{ij}a_{ji} - t)^2] = t^2 $ 
	and $\E[|a_{ij} a_{ji} - t|^2] = 1$.
Many results are known for EGE matrices. 
In particular, the limiting eigenvalue distribution has been proved 
by Girko
to be the uniform law on the ellipse 
centered at the origin with half long axis $1+t$ and short axis $1-t$. 
We refer to  \cite[Theorem 7]{Girko_elliptic} and \cite{Sommers_al} for the first instances
 of this result.\\
\\
In the recent work \cite{Bordenave_Chafai_Garcia}, it has been proved that the spectral radius of matrices with i.i.d.\ centered entries, 
called Girko matrices, converges in probability to $1$ under the minimal assumption of a second moment on its entries. 
In order to derive this result, the authors considered the reciprocal characteristic polynomial associated to 
such matrices defined by $q_n(z) = z^n p_n \left(\frac{1}{z} \right)$ for $z \in \D = \{ y \in \C: |y| < 1 \}$, 
where $p_n$ is the characteristic polynomial. The main result of \cite{Bordenave_Chafai_Garcia} is the convergence in law, for the topology of 
local uniform convergence, of the sequence of functions $\{q_n\}_{n\geq 1}$ to a random function which 
is universal, in the sense that its expression involves only the second moment of the entries of the matrix. 
Our result aims at deriving the convergence of the normalised characteristic polynomial in the case of the 
EGE \eqref{eq:elliptic_sum_gue} at each $t \in [0,1]$ and at identifying the limiting object in the conjectured universality.
In particular, for $t=1$ our result gives the convergence of the characteristic polynomial for 
GUE matrices to a random analytic function.\\
\\
\noindent
Characteristic polynomials of random matrices have been studied extensively. 
For Haar unitary matrices or, more generally, for Circular $\beta$-Ensembles 
(C$\beta$E), 
the characteristic polynomial
outside the unit disk
behaves in a similar way as in
Theorem \ref{theorem:main_result} for $t=0$ as is stated in
\cite[Theorem 1.3]{Najnudel_Paquette_Simm}.
Moreover, the characteristic polynomial
inside and outside the unit disk exhibit the same but independent limiting behavior.
More interestingly, the scaling limit
around a point at the unit circle has been studied in \cite{Chhaibi_2016} 
by showing a convergence towards 
a random analytic function whose zeros form a determinantal point process on the real line. 
Limit expressions for the characteristic polynomial of C$\beta$E matrices 
are furthermore related to 
the Gaussian multiplicative chaos and to the theory of orthogonal polynomials on the unit circle, 
see \cite{Lambert_Najnudel_cbe}.
In the case of Gaussian 
$\beta$-Ensembles, approximations of the characteristic polynomial in the complex plane 
where found in terms of 
log-correlated Gaussian fields, see \cite{Lambert_Paquette}. 
For Haar random matrices, asymptotics for moments of derivatives of 
the characteristic polynomial have also been computed in \cite{Simm_Fei}. 
They derive limits both inside the unit disk and for 
mesoscopic and microscopic regimes when $z$ approaches the unit circle.
The cases of orthogonal, 
symplectic and GUE random matrices have been studied in \cite{limit_charac_poly_classical_ensembles} 
where ratios of characteristic polynomials are shown to converge to a random entire function which was 
constructed in \cite{Chhaibi_2016} and related to Haar random matrices. 
\\
\\
The study of the reciprocal characteristic polynomial for Girko matrices in 
\cite{Bordenave_Chafai_Garcia} was partially inspired from the work \cite{Basak_Zeitouni} 
on Toeplitz matrices. The same object was studied for other models. The case of 
sparse matrix models having i.i.d.\ non-centered Bernoulli entries was treated 
in \cite{Coste}. The reciprocal characteristic polynomial of such matrices converges 
to a random function expressed using Poisson series \cite{Coste}. 
In \cite{Coste_Lambert_Zhu_permutations}, the same type of convergence was obtained 
for sums of random uniform permutation matrices. For a fixed number of random matrices 
in the sum, the limit has the same form given by the exponential of a Poisson series, whereas 
for a number of terms going to infinity in a prescribed way, the limit has the 
form given by the exponential of a Gaussian series as in \cite{Bordenave_Chafai_Garcia}.
Exponential of Poisson series were also obtained as the limit of characteristic polynomials 
for permutation matrices in \cite{Bahier_1, Bahier_2}.
\\
\\
In relation to elliptic matrices, the uniform law on the 
ellipse can be obtained as the asymptotic distribution of zeroes of random polynomials which are related 
to Weyl polynomials, see \cite{zeroes_random_poly_2024}.\\
\\
The motivation from the work \cite{Bordenave_Chafai_Garcia} 
was to obtain the convergence of the spectral radius for Girko matrices. 
One could ask for a study of the fluctuations around the limit. 
For the Ginibre Ensemble, 
one has Gumbel fluctuations for the maximum modulus around $1$, 
see \cite{Rider}. The Gumbel distribution also appears as the limit 
fluctuation for the largest real part of either real of complex Ginibre matrices 
\cite{Cipolloni_al}. For the GUE, 
one has Tracy-Widom fluctuations for the maximum eigenvalue around $2$, 
see \cite{Tracy_Widom} and references therein. 
In \cite{Johansson}, we may find a family of determinantal processes that 
interpolates between a Poisson process with intensity $\ed^{-x}$ 
and the Airy process. The distribution 
function of its last particle interpolates between the Gumbel and 
Tracy-Widom distributions, see 
\cite[Theorem 1.3]{Johansson}. As a two-dimensional version, \cite{Bender} 
considered the Elliptic Ginibre Ensemble and 
an interpolating determinantal processes to prove scaling limits 
for the eigenvalue point process.

\subsection{Main result}

Let $n \geq 1$ and $t \in [0,1]$. Consider 
$p_{n,t}(z) = \det (z - \frac{1}{\sqrt{n}}A_{n,t})$, the scaled characteristic polynomial 
of a matrix $A_{n,t}$ sampled from the Elliptic Ginibre Ensemble 
\eqref{eq:elliptic_sum_gue}.  
Define 
 $f_{n,t}: \D \to \mathbb C$ as the normalised characteristic polynomial 
 of $A_{n,t}$,

\begin{equation}
f_{n,t}(z) \coloneqq \det \left(1 + tz^2 
- \frac{z}{\sqrt{n}}A_{n,t} \right) \ed^{-\frac{ntz^2}{2}}. \label{def:f_n_t}
\end{equation}

\noindent
We endow the space of holomorphic functions
on $\mathbb D$ with the topology
of uniform convergence on compact sets and state our main result as follows.
 
 \begin{theorem}[Convergence of the normalised characteristic polynomial]
 \label{theorem:main_result}
    As $n \rightarrow \infty$, 
    \begin{equation*}
      f_{n,t}  \xrightarrow[]{\mathrm{law}}  \exp(-F_t)
    \end{equation*}
     where $F_t$ is the Gaussian holomorphic function on $\mathbb D$ defined by
    \begin{equation}
    \label{eq:F_t}
        F_t(z) \coloneqq \sum_{k \geq 1} X_{k} \frac{z^k}{\sqrt{k}}
    \end{equation}
     for a family $( X_{k} )_{k \geq 1}$ of independent Gaussian random variables 
     on $\mathbb C$
     satisfying 
	 \begin{equation*}
        \E[X_{k}] = 0, \ \E[X_{k}^2] = t^k \text{ and } \E[|X_{k}|^2] = 1.
      \end{equation*}
\end{theorem}

\vspace{2mm}
\noindent
Let us give some intuition for the choice in \eqref{def:f_n_t}. 
Since the empirical measure of eigenvalues
of $A_{n,t}$ converges to the uniform measure $\sigma_t$ on the ellipse
\begin{equation}
	\label{eq:ellipse}
	\mathcal E_t = \left\{ x+iy \in \mathbb C: 
	\left( \frac{x}{1+t} \right)^2 + \left( \frac{y}{1-t} \right)^2 \leq 1 \right\},
\end{equation}
to study the behavior of
the characteristic polynomial $p_{n,t}$ on
$\mathbb C \setminus \mathcal E_t$, we send this set 
to the unit disk in the simplest holomorphic way, namely, by using the map
$g_t: \D \setminus \{0\} \to \mathbb C \setminus \mathcal E_t$,
\[g_t(z) = \frac{1}{z} + tz.\]
In the case of $t=1$, we should define $\mathcal E_1 = [-2, 2]$ 
and $\sigma_1 = \lim_{t \to 1} \sigma_t $ is the semi-circular law,
which is consistent with Wigner's semicircular law. In this case,
$g_1$ is the so-called Joukowsky transform and,
moreover, 
we have the simple relation
$g_t(z) = \sqrt tg_1(\sqrt tz)$ whenever $t \neq 0$. 
Under this change of variables, the characteristic polynomial 
$p_{n,t}$ is
\[p_{n,t}\circ g_t(z) = \det \left(g_t(z) - \frac{A_{n,t}}{\sqrt n} \right)
= \frac{1}{z^n}\det \left(1 + tz^2 - \frac{z}{\sqrt{n}}A_{n,t} \right)
\]
Finally, 
$n^{-1}\log \det \left(1 + tz^2 - z n^{-1/2}A_{n,t} \right)$
should converge to
$\int \log(1+tz^2-zw) \mathrm d\sigma_t(w)$
which is $tz^2/2$, as seen in the proof of Lemma
\ref{prop:expectation_convergence}, suggesting the exponential factor 
in $f_{n,t}$. \\
\\
 Using these notations, from Theorem \ref{theorem:main_result}
 we obtain the convergence of the
normalised characteristic polynomial
$\tilde p_{n,t}(u) = (g_t^{-1}(u))^n \ed^{-nt (g_t^{-1}(u))^2/2}  p_{n,t}(u)$
\[ \tilde p_{n,t} \xrightarrow[]{\mathrm{law}}
\exp(-F_t \circ g_t^{-1}) \]
for the topology of uniform convergence
on compact sets of 
$\mathbb C \setminus \mathcal E_t$.
This is, in fact, equivalent 
to Theorem \ref{theorem:main_result} due to the holomorphicity
of $f_{n,t}$ at zero.
It explains the notation ``normalised characteristic polynomial'' since $f_{n,t}$ and $\tilde p_{n,t}$
are the same written in different coordinate systems.\\
\\
From Theorem \ref{theorem:main_result}, one can derive the 
following result given in \cite[Theorem 2.2]{ORourke_Renfrew} 
for a class of elliptic matrices that includes our Gaussian case. 
Nevertheless, since an
explicit density can be written for the eigenvalues in the Gaussian case, 
we may also use large deviation arguments
to obtain the lack of outliers.\\

\begin{corollary}[Lack of outliers]
	Let $C \subset \C$ be a closed set disjoint from $\mathcal E_t$. Then, 
	as $n \to \infty$,
	\begin{equation}
	  N_n(C) \coloneqq \# \left\{i \in [n]: \frac{\lambda_i}{\sqrt{n}} \in C \right\} 
	  \xrightarrow[]{\phantom{c} \proba \phantom{c}}  0.
	\end{equation}
\end{corollary}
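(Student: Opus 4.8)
\emph{Proof proposal.} The plan is to deduce the statement from Theorem~\ref{theorem:main_result} by the standard principle that the zeros of a convergent sequence of random analytic functions accumulate only on the zeros of the limit (Hurwitz's theorem), after a routine a priori confinement of the spectrum. Since $N_n(C)$ is a non-negative integer, it is equivalent to prove $\prob(N_n(C)\ge 1)\to 0$. First I would reduce to compact $C$. Fix $R>1+t$ and set $C_R=C\cap\overline{D(0,R)}$, which is compact and still disjoint from $E_t$. An eigenvalue of $\tfrac{1}{\sqrt n}A_{n,t}$ of modulus larger than $R$ forces $\|\tfrac{1}{\sqrt n}A_{n,t}\|_{\mathrm{op}}>R$, so
\[
\prob\bigl(N_n(C)\ge 1\bigr)\le \prob\bigl(N_n(C_R)\ge 1\bigr)+\prob\Bigl(\bigl\|\tfrac{1}{\sqrt n}A_{n,t}\bigr\|_{\mathrm{op}}>R\Bigr).
\]
By \eqref{def:elliptic_ensemble}, the triangle inequality for the spectral norm, and the fact that the two scalar coefficients have modulus at most $1$, one has $\|\tfrac{1}{\sqrt n}A_{n,t}\|_{\mathrm{op}}\le \|\tfrac{1}{\sqrt n}X_n\|_{\mathrm{op}}+\|\tfrac{1}{\sqrt n}Y_n\|_{\mathrm{op}}$, and this sum converges almost surely to a finite constant by the classical estimate on the operator norm of the GUE; hence the last probability tends to $0$ for $R$ large, and we may assume $C$ compact.

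For compact $C$ disjoint from $E_t$ I would work on $\C\setminus E_t$, using the equivalent form of Theorem~\ref{theorem:main_result} recorded above: in the topology of uniform convergence on compact subsets of $\C\setminus E_t$, $\tilde p_{n,t}\xrightarrow[]{\mathrm{law}}\tilde p_t:=(\kappa_t\circ g_t^{-1})\,\ed^{-F_t\circ g_t^{-1}}$, where $\tilde p_{n,t}(u)=(g_t^{-1}(u))^n\ed^{-nt(g_t^{-1}(u))^2/2}p_{n,t}(u)$. Since $g_t^{-1}$ maps $\C\setminus E_t$ into $\D\setminus\{0\}$, the prefactor $(g_t^{-1}(u))^n\ed^{-nt(g_t^{-1}(u))^2/2}$ is non-vanishing on $\C\setminus E_t$, so the zeros of $\tilde p_{n,t}$ in $\C\setminus E_t$ are exactly the rescaled eigenvalues $\lambda_i/\sqrt n$ lying there; as $C\subset\C\setminus E_t$, this gives $\{N_n(C)\ge 1\}=\{\tilde p_{n,t}\in Z_C\}$, where $Z_C$ is the set of functions holomorphic on $\C\setminus E_t$ with at least one zero in $C$. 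The set $Z_C$ is closed for local uniform convergence: if $h_m\to h$ and $h_m(u_m)=0$ with $u_m\in C$, pass to a subsequence with $u_m\to u\in C$ and use uniform convergence near $u$ to get $h(u)=0$. Moreover $\prob(\tilde p_t\in Z_C)=0$, because $\kappa_t$ is the exponential of a function holomorphic on $\D$ and hence non-vanishing there, while $\ed^{-F_t\circ g_t^{-1}}$ is non-vanishing since $F_t$ is almost surely holomorphic, hence finite, on $\D$; thus $\tilde p_t$ has no zero at all in $\C\setminus E_t$. The portmanteau theorem applied to the closed set $Z_C$ then yields
\[
\limsup_{n\to\infty}\prob\bigl(N_n(C)\ge 1\bigr)=\limsup_{n\to\infty}\prob\bigl(\tilde p_{n,t}\in Z_C\bigr)\le\prob\bigl(\tilde p_t\in Z_C\bigr)=0,
\]
which together with the previous step gives $\prob(N_n(C)\ge 1)\to 0$, that is, $N_n(C)\to 0$ in probability.

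I do not expect a genuine obstacle here: granting Theorem~\ref{theorem:main_result}, this is just ``convergence in law of a random analytic function plus Hurwitz'', and the only mildly delicate points are the a priori localisation of the spectrum in the first step and the almost sure absence of zeros of $\tilde p_t$ in the second. As indicated in the remark preceding the corollary, one can also bypass Theorem~\ref{theorem:main_result} entirely: the EGE has an explicit Coulomb-gas eigenvalue density whose one-point intensity $\rho_n$ is exponentially small outside $E_t$ — this is precisely the uniform control of the Hermite kernel away from the ellipse that already underpins Theorem~\ref{theorem:main_result} — so that the expected number of outliers $\E[N_n(C)]=\int_C\rho_n$ tends to $0$, and Markov's inequality gives $\prob(N_n(C)\ge 1)\le\E[N_n(C)]\to 0$ directly.
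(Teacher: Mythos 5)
Your proposal is correct, and its core is the same as the paper's: convergence in law of the normalised characteristic polynomial plus almost sure non-vanishing of the limit, combined through a closed-set (portmanteau/Hurwitz) argument. The genuine difference is how the unboundedness of $C$ is handled. You work in the exterior coordinate with $\tilde p_{n,t}$ on $\C\setminus E_t$, so you first confine the spectrum to a large disk via $\|n^{-1/2}A_{n,t}\|_{\mathrm{op}}\leq\|n^{-1/2}X_n\|_{\mathrm{op}}+\|n^{-1/2}Y_n\|_{\mathrm{op}}$ and the classical GUE norm asymptotics; this works, but it imports an external estimate the paper never needs. The paper instead pulls $C$ back through $g_t$: since $C$ is closed and disjoint from the compact ellipse $E_t$, the set $g_t^{-1}(C)$ stays away from the unit circle, and its closure in $\D$ is compact (possibly adding only the origin, which is harmless since $f_{n,t}(0)=1$), so that $\proba[N_n(C)>0]=\proba[\inf_{u\in K}|f_{n,t}(u)|=0]\to\proba[\inf_{u\in K}|f_t(u)|=0]=0$ directly, with no a priori localisation of eigenvalues — the point at infinity is handled for free in the disk coordinate. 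Two minor remarks on your write-up: the non-vanishing of $\kappa_t$ on all of $\D$ is not immediate from the exponential series in Theorem \ref{theorem:main_result} (the paper points out that the series for $\log\kappa_t$ is a priori only known to converge near the origin), but it is established at the end of Section \ref{section_convergence_coefs} via Lemma \ref{lemma:asymptotics_second_moment}, so your use of it is legitimate; and your kernel/Markov alternative matches the paper's remark that explicit-density or large deviation arguments could bypass Theorem \ref{theorem:main_result} altogether.
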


\begin{proof}
Let $C \subset \mathbb C$ be a closed set disjoint from $\mathcal E_t$.
Recall that $g_t(z) = \frac{1}{z} + tz$ and consider
$\tilde C = g_t^{-1}(C)$ which is closed 
in $\mathbb D \setminus \{0\}$ so that its closure $K$
on $\mathbb D$
is compact.  Then, 
\begin{align*}
    \proba[|N_n(C)| >0]
    &= \proba[\inf_{z \in C} |p_{n,t}(z)| = 0] \\
    &= \proba[\inf_{u \in K } |f_{n,t}(u)| = 0] \\
    & \rightarrow \proba[\inf_{ u \in K} |e^{-F_t(u)}| = 0] = 0.
\end{align*}
\end{proof}

\begin{figure}[ht] 
  \begin{minipage}[b]{0.5\linewidth}
    \centering
    \includegraphics[scale=0.23]{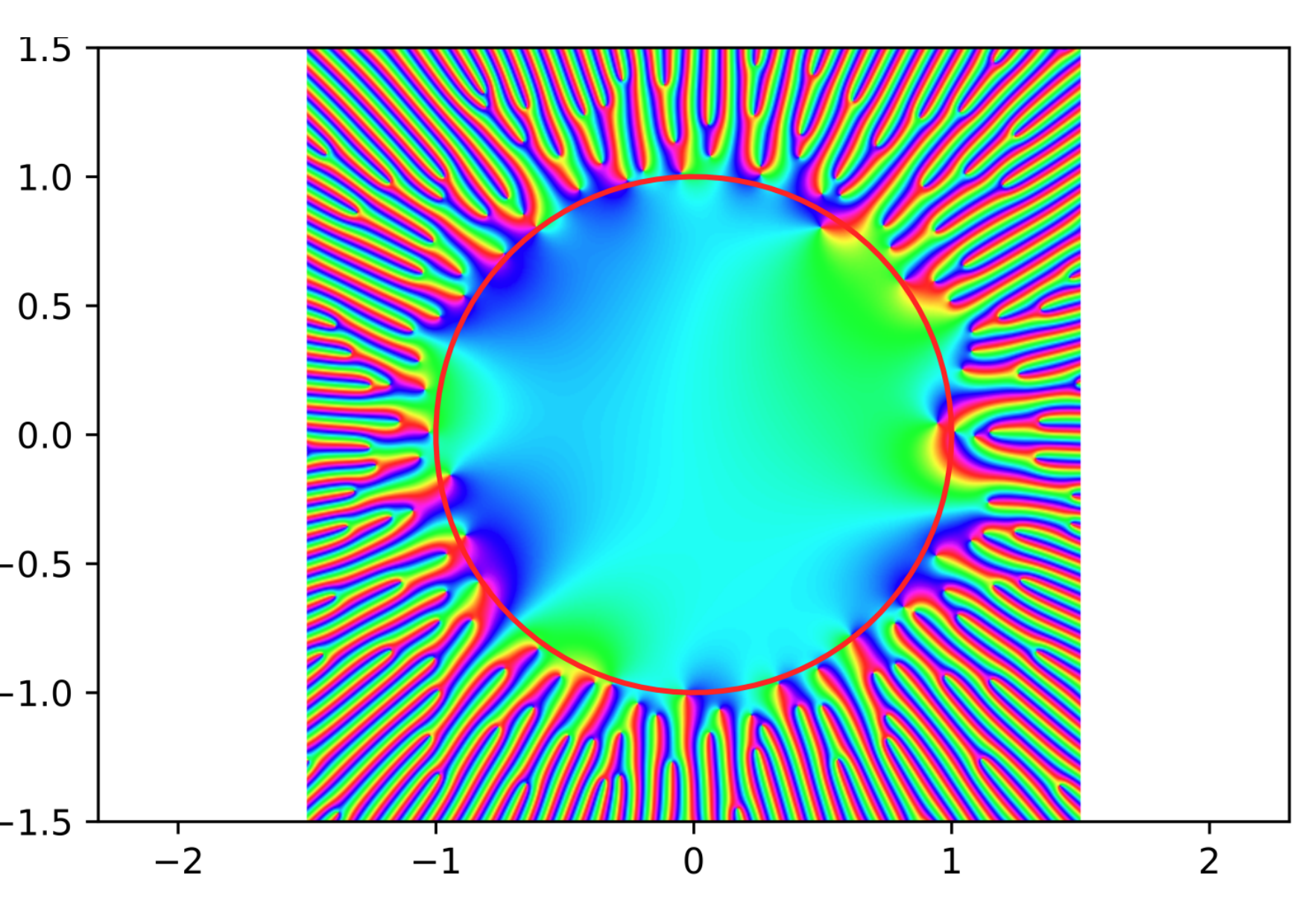} 

  \end{minipage}
  \begin{minipage}[b]{0.5\linewidth}
    \centering
    \includegraphics[scale=0.23]{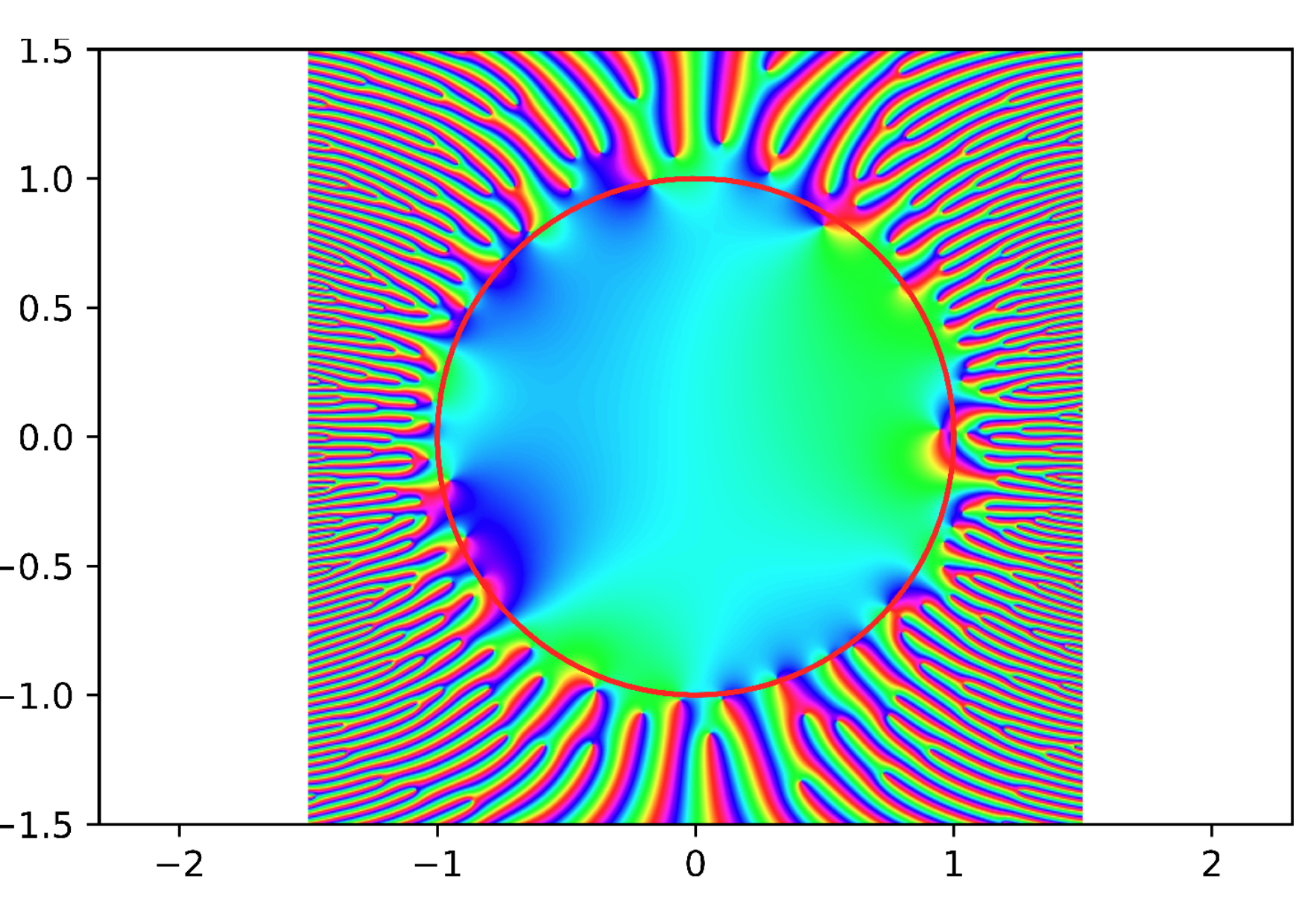} 

  \end{minipage} 
  \begin{minipage}[b]{0.5\linewidth}
    \centering
    \includegraphics[scale=0.23]{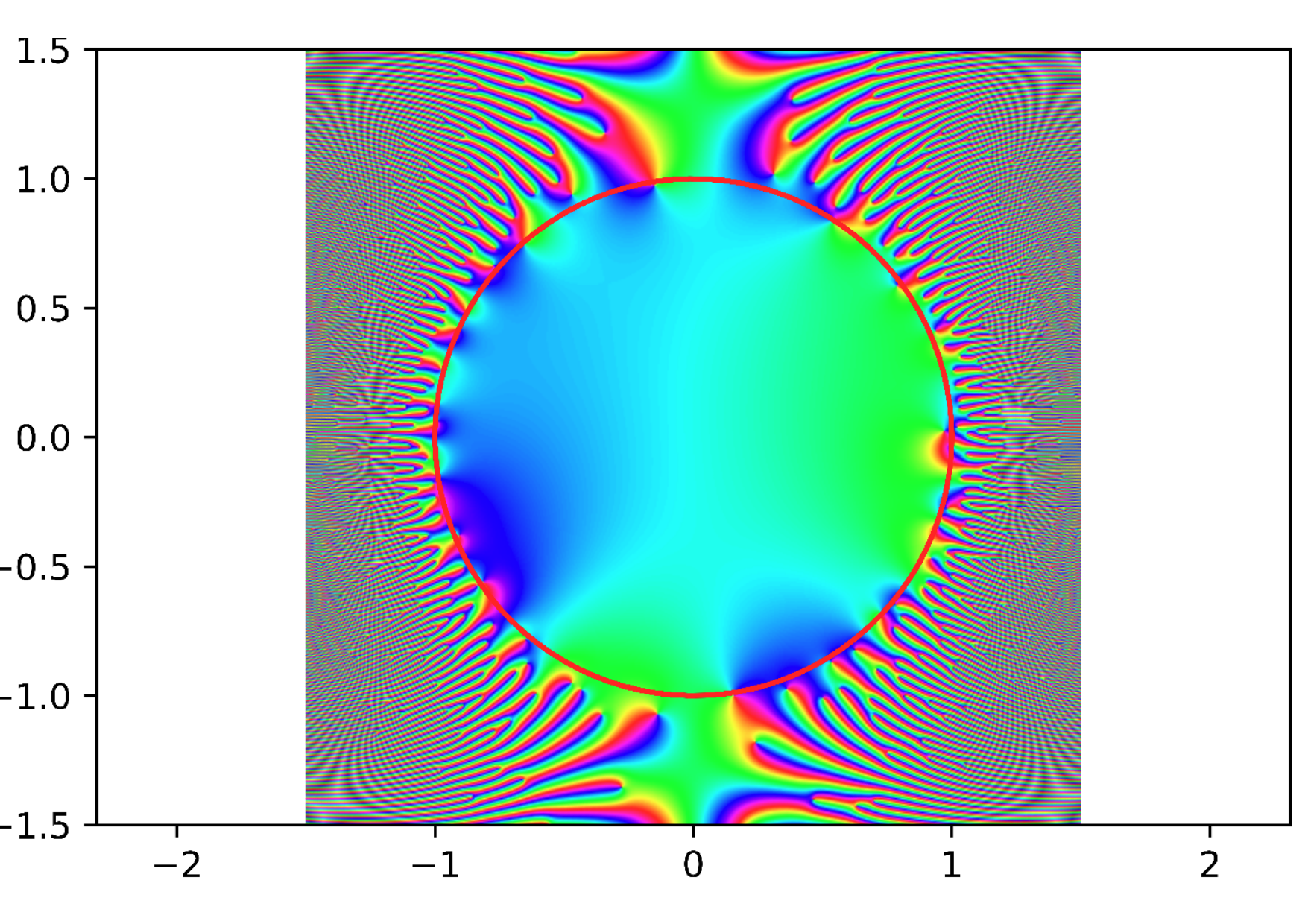} 

  \end{minipage}
  \begin{minipage}[b]{0.5\linewidth}
    \centering
    \includegraphics[scale=0.23]{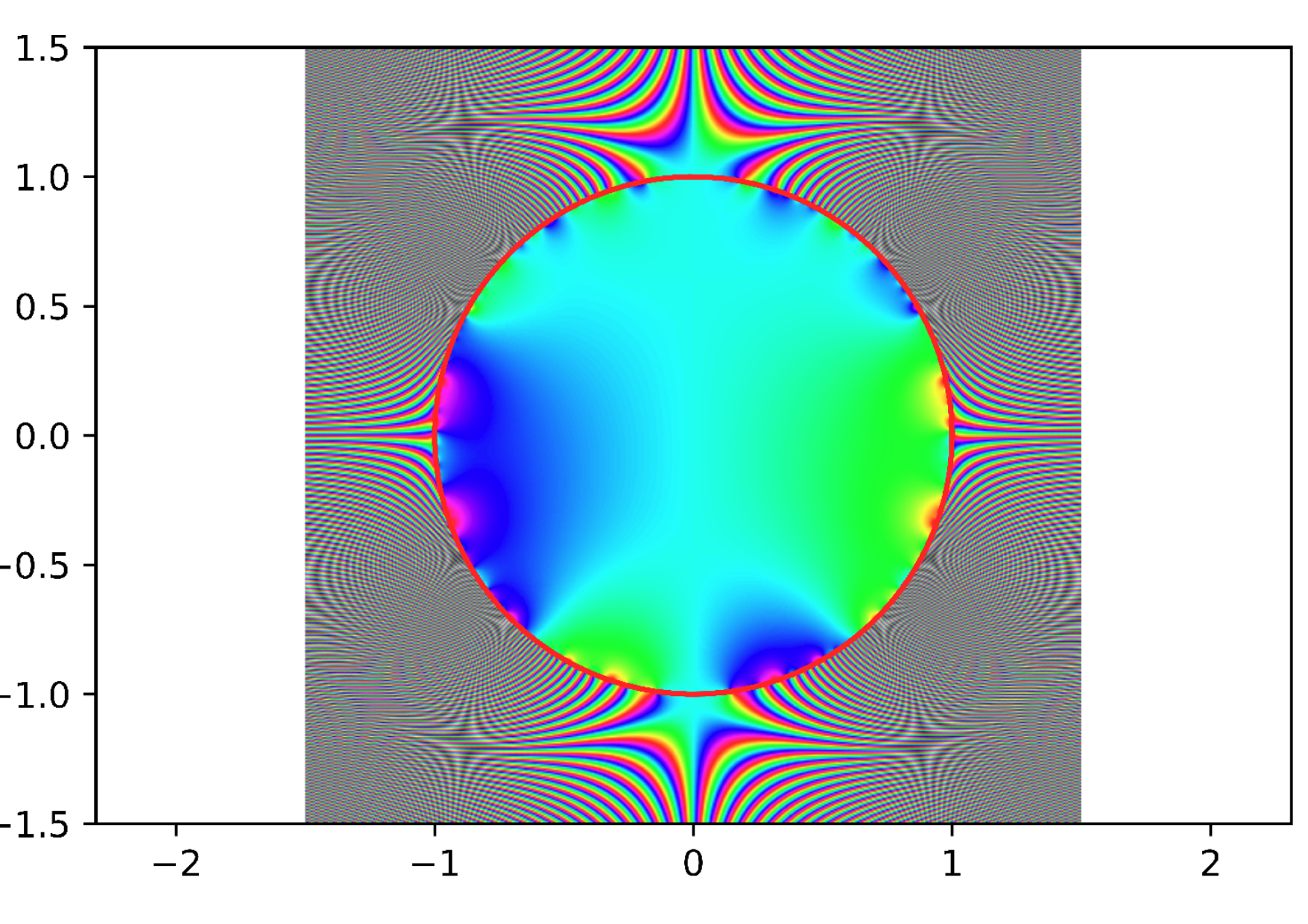} 

\end{minipage} 
\caption{Illustration of Theorem \ref{theorem:main_result}. 
Phase portrait of the normalised characteristic polynomial of an EGE matrix 
of size $250$ for different values of $t$: 0 (top left), $0.3$ (top right), 
$0.6$ (bottom left) and $1$ (bottom right). 
The unit circle is represented in red.}
\label{fig7}
\end{figure}

\noindent
In fact, we expect an analogue of Theorem \ref{theorem:main_result} to hold 
in a much more general setting 
as conjectured in \cite{Bordenave_Chafai_Garcia}.
The limit would only depend on some of the
first four moments of the coefficients of the random matrix
as suggested in Section \ref{subsec_optimal_moments}.
A glimpse
of this universality can be seen, for instance, when calculating
the expected value of the characteristic polynomial.
This depends only on $t = \E[a_{12} a_{21}]$
as explained in
the proof of the following theorem where 
a simple expression for its limit is stated.

\begin{theorem}[Average characteristic polynomial]
  \label{th:cv_mean_poly_univ}
      For each $n$, let 
      $A_{n,t} = (a_{ij}, 1 \leq i,j \leq n)$ be a random matrix such that 
      $\{ (a_{ij}, a_{ji}), 1 \leq i < j \leq n \}$ are i.i.d.\ centered pairs which are
      independent of 
      the i.i.d.\ centered family $\{ a_{ii}, 1 \leq i \leq n \}$ with
      $\E [|a_{ij}|^2] < \infty$  for all $1 \leq i,j \leq n$ and $\E[a_{12}a_{21}] = t \in [0,1]$. 
      Then, for $z$ uniformly in $\D$,
      \begin{equation}
      \label{eq:cv_mean_poly_universal}
          \lim_{n \rightarrow +\infty} \E \bigg[
          \det \left(1 + tz^2 - \frac{z}{\sqrt{n}}A_{n,t} \right) \ed^{-\frac{ntz^2}{2}}  \bigg] = \frac{1}{\sqrt{1 - tz^2}}.
      \end{equation}
\end{theorem}

\subsection{Open questions and comments}

\subsubsection{Extension via matching moments}

Since the way we show tightness is by controlling
the second moment of $f_{n,t}$ and since this second moment 
depends only on the first
four moments of
$A_{n,t}$, tightness of $f_{n,t}$
still holds for the model described in 
Section \ref{subsec_optimal_moments}  
for coefficients $(a_{ij})_{i,j \geq 1}$ whose
first four moments coincide with
those of the EGE. Moreover,
the proof of convergence of the coefficients of $f_{n,t}$
also works in the case where the coefficients
have all moments finite so that 
Theorem \ref{theorem:main_result} holds 
for coefficients $(a_{ij})_{i,j \geq 1}$ with all 
moments finite and whose
first four moments coincide with
those of the EGE.

\subsubsection{Minimal moment condition and universality}
\label{subsec_optimal_moments} 
 As conjectured in \cite{Bordenave_Chafai_Garcia}, 
 the convergence in Theorem \ref{theorem:main_result} of the normalised 
 characteristic polynomial is believed to hold under the minimal moment condition
\begin{equation}
\label{conjecture_opt_moments}
    \E \left[|a_{12} a_{21}|^2 \right] < \infty
\end{equation}
on the entries $(a_{ij})_{i,j \geq 1}$, which gives a condition of a fourth order moment for Wigner 
matrices and second order moment for Girko matrices. The context adapted to
this conjecture is the one of 
elliptic random matrices \cite[Definition 1.3]{Nguyen_ORourke}. 
This model was introduced by Girko in \cite{Girko_elliptic} and \cite{Girko_elliptic_ten} and
a version of this model consists of 
the following random matrices. 
Consider a family  of square-integrable centered 
random variables $(a_{ij})_{i,j \geq 1}$
satisfying that the family of random elements $\{(a_{ij},a_{ji}): i < j\} \cup \{a_{ii}: i \geq 1\}$ 
is independent and whose
law is invariant under any permutation of the indices or, equivalently,
the law of $(a_{ij},a_{ji})$
coincides with the law of $(a_{i'j'}, a_{j'i'})$ whenever $|\{i,j\}| = |\{i',j'\}|$.
If 
\[\E[|a_{12}|^2] = 1 \quad \mbox{ and } \quad \E[a_{12}a_{21}] = t, \]
the matrix $A_n = (a_{ij})_{1 \leq i,j \leq n}$ is said to be
$t$-Girko.
The convergence of the average eigenvalue distribution towards
the uniform distribution on the ellipse 
\eqref{eq:ellipse} has been proved
under different conditions on the variables, see
\cite{Nguyen_ORourke,ORourke_Renfrew,Naumov}. We expect the following version of
Theorem \ref{theorem:main_result} to hold for the general
$t$-Girko matrices
described above. Denoting 
$\tau = \E[a_{12}^2]$, $s = \E[a_{11}^2] - t - \tau$ and 
$q = \E [(a_{12}a_{21}-t)^2] - t^2 - \tau^2$, the limit of 
$\det\big(1 + tz^2 - z \frac{A_{n,t}}{\sqrt{n}} \big) \exp(-ntz^2/2)$ 
is expected to be given by
\begin{equation}
    \label{eq:universal_limit}
    \sqrt{1 - \tau z^2} \ed^{-sz^2/2} \ed^{-qz^4/4} 
    \ed^{- \sum_{k \geq 1} Y_k \frac{z^k}{\sqrt{k}}}
\end{equation}
where $(Y_k)_{k \geq 1}$ are independent centered complex Gaussians 
such that 
$Y_1$ has the same variance as $a_{11}$,
$Y_2$ has the same variance as
$a_{12} a_{21}$  and, for $k \geq 3$, 
the variance of $Y_k$ is the sum of
the $k$-th power of the 
variance of $a_{12}$ and the $k$-th power of the covariance of 
$a_{12}$ and $a_{21}$ or, more explicitly,
$\E[Y_k^2] = \E[a_{12}^2]^k + \E[a_{12}a_{21}]^k = \tau^k + t^k$ and 
$\E[|Y_k|^2] = \E[|a_{12}|^2]^k + \E[a_{12} \overline{a_{21}}]^k
=1 + \E[a_{12} \overline{a_{21}}]^k$.

\subsubsection{Matrices with entries in  \texorpdfstring{$\{ 0, 1 \}$}{Lg}}
As described above, a convergence of the reciprocal 
characteristic polynomial for 
matrices with independent Bernoulli entries with non-zero 
expectation has been proved 
in \cite{Coste}. 
The limiting random holomorphic function can be expressed using 
Poisson random variables, see \cite[Theorem 2.3]{Coste}. 
One could ask for an analogue of the Elliptic Ginibre Ensemble 
for such matrices and for the convergence 
of its normalised characteristic polynomial.
\\
\\
Extension of the work \cite{Coste_Lambert_Zhu_permutations} for 
Ewens distributed random permutations can be found in \cite{Francois_Ewens}.
The convergence of 
traces of such random matrices were understood from the work of 
Nikeghbali and Zeindler \cite{Nikeghbali_Zeindler}, while tightness uses asymptotics 
of Hwang \cite{Hwang}.

\subsubsection{Determinantal Coulomb gases}

As explained in \ref{subsec_optimal_moments}, 
this work can be thought of as a first step towards the convergence
of the characteristic polynomial outside the support of the equilibrium measure for general elliptic 
random matrices. Nevertheless, we could have followed a different path, which is to look the Elliptic 
Ginibre Ensembles as a particular case of a determinantal Coulomb gas. In this
vein, it may be possible to show the convergence of the traces by adapting results from
\cite{Ward}
and to show tightness of the characteristic polynomial outside the support of the equilibrium measure for 
more general determinantal Coulomb gases by using, for instance, 
the results from
\cite{SzegoAsymptotics}.

\section{Proof of Theorem  \ref{theorem:main_result}}

From now on, given that the case $t=0$ is already
treated in \cite{Bordenave_Chafai_Garcia}, we assume $t \neq 0$ and
 omit the index $t$ since it is considered fixed for the rest of the article.\\
\\
As is standard, to show that $\{f_{n}\}_{n \geq 1}$ converges we show that
$\{f_{n}\}_{n \geq 1}$ is tight and 
that the coefficients in its power-series expansion around the origin
converge in law. We state this classical fact as Lemma \ref{lemma:conditions_a_b} 
below
and we refer to \cite[Section 4.2]{Bordenave_Chafai_Garcia} for a proof.
Recall that $\mathcal{H}(\D)$ is the space of holomorphic functions on the unit disk 
$\D$ endowed with the topology of uniform convergence 
on compact sets. 

\begin{lemma}[Tightness and convergence of coefficients imply convergence of functions]
	\label{lemma:conditions_a_b}
	Let $\{h_n \}_{n \geq 1}$ be a sequence of random elements in $\mathcal{H}(\D)$ and denote 
	the coefficients of $h_n$ by $(\xi_k^{(n)} )_{k \geq 0}$ so that for all $z \in \D$, 
	$h_n(z) = \sum_{k \geq 0} \xi_k^{(n)} z^k$. Suppose also that the following conditions hold.
	\begin{itemize}
		\item[$(a)$] The sequence $\{h_n \}_{n \geq 1}$ is
		a tight sequence of random elements of $\mathcal{H}(\D)$.
		\item[$(b)$] 
		There exists a sequence $(\xi_k)_{k \geq 0}$ of 
		random variables such that,
		for every $m \geq 0$, the vector $(\xi_0^{(n)}, \dots, \xi_m^{(n)})$
		converges in law as $n \to \infty$ to $(\xi_0, \dots, \xi_m)$.
	\end{itemize}
	Then, $h(z) = \sum_{k \geq 0} \xi_k z^k$ is a well-defined function in $\mathcal{H}(\D)$ and $h_n$
	converges in law towards $h$ in $\mathcal{H}(\D)$ for the topology of local uniform convergence.
\end{lemma}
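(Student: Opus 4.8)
\emph{Proof strategy.} The plan is to combine tightness with Prokhorov's theorem and to identify the limit law through its coefficients, as in the proofs of \cite[Lemma 3.2]{1} and \cite[Proposition 2.5]{4}. Recall that $\mathcal H(\D)$, with the topology of uniform convergence on compact subsets, is a Polish space, and that for every $k \geq 0$ the coefficient functional $\xi_k \colon \mathcal H(\D) \to \C$, $f \mapsto f^{(k)}(0)/k!$, is continuous: by Cauchy's formula, $\xi_k(f) = \frac{1}{2\pi i}\int_{|z| = r} z^{-k-1} f(z)\, \ddiff z$ for any $0 < r < 1$, so $\xi_k$ depends continuously on the restriction of $f$ to the circle $\{|z| = r\}$. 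Hence each truncation $\pi_m \colon f \mapsto (\xi_0(f), \dots, \xi_m(f)) \in \C^{m+1}$ is continuous, and $f \mapsto (\xi_k(f))_{k \geq 0}$ is a continuous injection of $\mathcal H(\D)$ into $\C^{\N}$; by the Lusin--Souslin theorem it is a Borel isomorphism onto its image, so in particular the Borel $\sigma$-algebra of $\mathcal H(\D)$ coincides with $\sigma(\xi_k \colon k \geq 0)$.

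First I would extract subsequential limits. By assumption $(a)$ and Prokhorov's theorem, the family $\{ f_n \}_{n \geq 1}$ is relatively compact in law: every subsequence admits a further subsequence along which $f_{n_j}$ converges in law to some random element $g$ of $\mathcal H(\D)$. Fixing $m \geq 0$ and applying the continuous mapping theorem to $\pi_m$ gives that $(\xi_0^{(n_j)}, \dots, \xi_m^{(n_j)})$ converges in law to $\pi_m(g)$; but by assumption $(b)$ the same vector converges in law to $(\xi_0, \dots, \xi_m)$. Therefore $\pi_m(g)$ and $(\xi_0, \dots, \xi_m)$ have the same law for every $m \geq 0$, so the $\C^{\N}$-valued random variable $(\xi_k(g))_{k \geq 0}$ has the same finite-dimensional distributions, and hence (the finite cylinders forming a generating $\pi$-system of $\mathcal B(\C^{\N})$) the same law, as $(\xi_k)_{k \geq 0}$.

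Next I would check that $f(z) = \sum_{k \geq 0} \xi_k z^k$ is a bona fide random element of $\mathcal H(\D)$. Since $g$ takes values in $\mathcal H(\D)$ almost surely, its coefficient sequence almost surely lies in the Borel subset $S = \{ (a_k)_{k \geq 0} \in \C^{\N} \colon \limsup_k |a_k|^{1/k} \leq 1 \}$ of $\C^{\N}$, i.e. the sequences of radius of convergence at least $1$; by the previous paragraph, $(\xi_k)_{k \geq 0}$ also lies in $S$ almost surely, so $f$ is almost surely a well-defined holomorphic function on $\D$, and it is a measurable $\mathcal H(\D)$-valued map because the inverse of $f \mapsto (\xi_k(f))_{k \geq 0}$ restricted to $S$ is Borel (again by Lusin--Souslin). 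Finally, since $\mathcal B(\mathcal H(\D))$ is generated by $(\xi_k)_{k \geq 0}$ and $(\xi_k(f))_{k \geq 0} = (\xi_k)_{k \geq 0}$ has the same law as $(\xi_k(g))_{k \geq 0}$, the random functions $f$ and $g$ have the same law on $\mathcal H(\D)$.

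Putting this together: every subsequence of $\{ f_n \}_{n \geq 1}$ has a further subsequence converging in law to $f$, and therefore $f_n$ converges in law to $f$ in $\mathcal H(\D)$. The argument is entirely soft. The only points that require a little care are the continuity of the coefficient functionals (immediate from Cauchy's formula) and the identification step, namely that a probability measure on $\mathcal H(\D)$ is pinned down by the joint laws of all finite families of Taylor coefficients and that the limiting coefficient sequence automatically has radius of convergence at least $1$. I expect this bookkeeping, rather than any analytic estimate, to be the main and only mild obstacle; in particular the radius-of-convergence property comes for free from the existence of the subsequential limit $g$, so no direct tail estimate on the $\xi_k$ is needed.
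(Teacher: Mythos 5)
Your proof is correct: the combination of Prokhorov's theorem with the continuity of the coefficient functionals, the identification of subsequential limits through finite-dimensional coefficient laws, and the observation that the limiting coefficient sequence inherits radius of convergence at least $1$ from the subsequential limit is exactly the standard argument. The paper does not reprove this lemma but imports it from \cite[Lemma 3.2]{1} (close to \cite[Proposition 2.5]{4}), whose proof proceeds along essentially the same lines as yours, so there is nothing to add.
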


\noindent
The first step is to show the following theorem.

\begin{theorem}[Tightness]
	\label{theorem:tightness}
	The sequence $\{ f_{n} \}_{n \geq 1}$ is tight.
\end{theorem}

\noindent
The proof uses known properties of the Elliptic Ginibre Ensemble 
and its relation to scaled 
Hermite polynomials. In particular, it relies on the 
determinantal aspect of its eigenvalue point process.
A local uniform control is derived from  \cite{Akemann_Duits_Molag}. 
This control allows us to derive tightness thanks to 
Montel's theorem as stated in Lemma \ref{lemma:reduction_to_unif_control} below. 
This ensures that Condition (a) of Lemma \ref{lemma:conditions_a_b}
is satisfied for $\{f_n\}_{n \geq 1}$.
\\
\\
To guarantee Condition (b) of Lemma \ref{lemma:conditions_a_b}, we
 express the coefficients of $\{f_n\}_{n \geq 1}$ using a family of polynomials that 
we call the modified Chebyshev polynomials.

\begin{definition}[Modified Chebyshev polynomials]
	\label{def:modif_cheby_poly}
	The \textit{modified Chebyshev polynomials} are the polynomials 
	$\big\{ P_k \big\}_{k \geq 1}$ satisfying the recurrence relation 
	\begin{equation}
		\label{eq:recurrence_modif_cheby}
		P_{k+1} = X P_k - tP_{k-1}, \ \ P_1 = X, \ \  P_2 = X^2 - 2t.
	\end{equation}
\end{definition}

\noindent
We may also give the modified Chebyshev polynomials more explicitly
	by their coefficients
	\begin{equation}
		\label{eq:expression_coefs_alpha}
		\alpha^{(k)}_{k-2j} \coloneqq (-t)^j \frac{k}{k-j} \binom{k-j}{j}
	\end{equation}
	so that $P_k = \sum_{j \geq 0} \alpha^{(k)}_{k-2j} X^{k-2j}$ or
	by its generating function
	\begin{equation} 
		\label{eq:generating}
		\sum_{k \geq 1} P_k(w) \frac{z^k}{k} = - \log (1 + tz^2 - zw).
	\end{equation}
	The latter expression can be obtained either by
	noticing that $P_k(w)= 2\sqrt t^k T_k(w/(2\sqrt t))$ 
	for $T_k$ the classical Chebyshev polynomials
	of the first kind or by taking the derivative in $z$ of
	\eqref{eq:generating}, which is 
	$(w-2z t)(1+tz^2 - zw)^{-1}$, so that
	$(\sum_{k \geq 0} P_{k+1}(w)z^k)(1+tz^2 - zw) = w-2zt$ which is
	the recurrence 	\eqref{eq:recurrence_modif_cheby}.\\
	\\
	Then, for $z \in \D$ small enough using \eqref{eq:generating}
	\begin{equation*}
		1 + tz^2 - \left(\frac{A_{n}}{\sqrt{n}} \right)z  =  \exp 
		\left( - \sum_{k \geq 1} P_k \left(\frac{A_{n}}{\sqrt{n}} \right) \frac{z^k}{k} \right),
	\end{equation*}
	by, for instance, showing this for diagonalizable matrices
	and the using density of this set of matrices. Finally, taking the determinant
	we obtain that, for $z \in \D$ small enough,
	\begin{equation}
		\label{trace_expression}
		f_{n}(z) := \det \Big(1 + tz^2 - \frac{z}{\sqrt{n}}A_{n}\Big)  \ed^{- \frac{ntz^2}{2}}
		\,  = \, \exp \left( -\sum_{k \geq 1} U_{k}^{(n)} \frac{z^k}{k}\right),
	\end{equation}
	where 
	\begin{equation}
		\label{eq:Uk}
		U_{k}^{(n)} \coloneqq 
		\mathrm{Tr} \left[ P_k \left(\frac{A_{n}}{\sqrt{n}} \right) \right] + nt \delta_{k=2}.
	\end{equation}
	In particular, the first
	$m$ coefficients of 
	$f_{n}$  can be expressed as polynomials which are independent of $n$
	of $U_{0}^{(n)}, \dots, U_{m}^{(n)} $ and vice versa.
	Thus, 
	showing the convergence in law of the variables $(U_{k}^{(n)})_{k \geq 1}$
	is equivalent to showing the convergence in law of 
	the coefficients of $f_{n}$, which is our way of characterizing the
	limit in Condition (b) of
	Lemma \ref{lemma:conditions_a_b}. 
	Since it is easier to deal with traces, we will study $U_{k}^{(n)}$ 
	and prove the convergence stated in the following theorem.

\begin{theorem}[Convergence of the traces of Chebyshev polynomials]
	\label{proposition:convergence_to_gaussian}
	
	\[\big(U_{k}^{(n)}\big)_{k \geq 1} 
	\xrightarrow[n \to \infty]{\phantom{aa} \mathrm{law} \phantom{aa}}
	(\sqrt k X_{k})_{k \geq 1},
	\]
	where $(X_{k})_{k \geq 1}$ is a family of independent centered complex Gaussian random variables such 
	that $\E[X_{k}^2] = t^k$ and $\E[|X_{k}|^2] = 1$. 
\end{theorem}

\begin{proof}[Conclusion of the proof of Theorem \ref{theorem:main_result}]
	We use Lemma \ref{lemma:conditions_a_b} with $h_n = f_n$
	and $\xi_k$ being the coefficients of
	$h(z)=e^{-F_t(z)}=\exp(-\sum_{k\geq 1} X_k \frac{z^k}{\sqrt k})$.
	Theorem	\ref{theorem:tightness} 
	ensures that Condition (a) 
	 is satisfied while
	Theorem	\ref{proposition:convergence_to_gaussian} 
	ensures that Condition (b) 
	is satisfied.
\end{proof}

\subsection{Tightness: Proof of Theorem \ref{theorem:tightness}}
\label{section_tightness}

\noindent
Recall that $f_{n}:\D \to \C$ is given by
\[   
    f_{n}(z) = \det \Big(1 + tz^2 - \frac{z}{\sqrt{n}}A_{n}\Big)  \ed^{- \frac{ntz^2}{2}}.
\]
\noindent
and the space 
   $\mathcal{H}(\D)$ of holomorphic functions on $\D$ is endowed with the 
   topology of uniform convergence on compact sets. Lemma \ref{lemma:reduction_to_unif_control} 
   below is the stochastic version of Montel's theorem which reduces the proof of tightness to a control on compact sets.

\begin{lemma}[Montel's theorem]
\label{lemma:reduction_to_unif_control}
    Suppose that for every compact $K \subset \D$, the sequence 
    $(||f_{n}||_K)_{n \geq 1}$ is tight, where $||f_{n}||_K = \max_{z \in K} |f_{n}(z)|$. 
    Then, $\{f_{n} \}_{n \geq 1}$ is tight.
\end{lemma}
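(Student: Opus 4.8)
The plan is to prove Lemma~\ref{lemma:reduction_to_unif_control} by the standard route that a uniformly (in $n$) locally bounded family of holomorphic functions is precompact in $\mathcal H(\D)$ with the topology of uniform convergence on compacts, via Montel's theorem, and then to upgrade this deterministic statement to tightness of the random elements $\{f_{n,t}\}_{n\geq 1}$ by a diagonal/exhaustion argument over a countable family of compact sets. Concretely, fix an exhaustion $\D = \bigcup_{j\geq 1} K_j$ by compact sets, say $K_j = \{|z|\leq 1 - 1/j\}$, chosen so that each compact $K\subset\D$ is contained in some $K_j$. The hypothesis gives, for each $j$, tightness of the real sequence $(\|f_{n,t}\|_{K_j})_{n\geq 1}$, i.e. for every $\varepsilon>0$ there is $R_{j,\varepsilon}<\infty$ with $\sup_n \prob[\|f_{n,t}\|_{K_j} > R_{j,\varepsilon}] \leq \varepsilon 2^{-j}$.

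Next I would define, for parameters $(R_j)_{j\geq 1}$, the set
\[
\mathcal K\big((R_j)_j\big) = \big\{ g \in \mathcal H(\D) : \|g\|_{K_j} \leq R_j \text{ for all } j\geq 1 \big\}.
\]
The key deterministic claim is that $\mathcal K((R_j)_j)$ is a compact subset of $\mathcal H(\D)$. It is closed, because $g\mapsto \|g\|_{K_j}$ is continuous on $\mathcal H(\D)$ for each fixed $j$ and $\mathcal K$ is an intersection of closed sets; and it is relatively compact by Montel's theorem, since any sequence in $\mathcal K$ is uniformly bounded on each $K_j$, hence on every compact subset of $\D$, and therefore has a locally uniformly convergent subsequence. (I would recall that $\mathcal H(\D)$ with this topology is a Polish space, so one may work with tightness in the usual Prokhorov sense.) Then, given $\varepsilon>0$, apply the hypothesis at level $\varepsilon 2^{-j}$ to get $R_j := R_{j,\varepsilon}$ and observe
\[
\prob\big[ f_{n,t} \notin \mathcal K((R_j)_j) \big] \leq \sum_{j\geq 1} \prob\big[ \|f_{n,t}\|_{K_j} > R_j \big] \leq \sum_{j\geq 1} \varepsilon 2^{-j} = \varepsilon,
\]
uniformly in $n$. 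Since $\mathcal K((R_j)_j)$ is compact, this is exactly the statement that $\{f_{n,t}\}_{n\geq 1}$ is tight.

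The only mild subtlety — and the step I would be most careful about — is the interchange between "tight on each $K_j$" and "tight on all $K_j$ simultaneously," which is handled precisely by the union bound with a summable choice of thresholds; the geometric weights $2^{-j}$ make this automatic. Everything else (continuity of the seminorms, Montel, closedness) is routine, and one could alternatively cite \cite[Lemma 3.2]{1} or \cite[Proposition 2.5]{4} for the abstract principle that "uniform tightness of all compact-set sup-norms implies tightness in $\mathcal H(\D)$," but I would prefer to include the short self-contained argument above.
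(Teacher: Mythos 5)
Your proposal is correct and takes essentially the same route as the paper: the paper's proof is simply the invocation of Montel's theorem together with the citation of \cite[Proposition 2.5]{4}, and your argument is precisely the standard proof of that cited principle (compactness in $\mathcal H(\D)$ of the sets $\{g : \|g\|_{K_j}\leq R_j \ \forall j\}$ via Montel plus closedness of the seminorm sublevel sets, followed by a union bound with summable thresholds over an exhaustion). No gaps.
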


\begin{proof}
    It is a consequence of the classical Montel's theorem of complex analysis.
    See, for instance, \cite[Proposition 2.5]{Shirai}.
\end{proof}

\begin{remark}
\label{remark:from_max_to_exp} 
By the subharmonicity of $|f_{n}(z)|^2$, 
saying that $(\mathbb E[\|f_{n}\|_K^2])_{n \geq 1}$ 
is a bounded sequence for every compact $K \subset \mathbb D$ 
is equivalent to saying that
$(\sup_{z \in K}\E[|f_{n}(z)|^2])_{n \geq 1}$ 
is a bounded sequence for every compact $K \subset \mathbb D$.
    See, for instance, \cite[Lemma 2.6]{Shirai}. In the Girko case of \cite{Bordenave_Chafai_Garcia}, one had a 
    remarkable orthogonality of the sub-determinants which led to an upper bound on the 
    desired quantity. As we no longer have this property, our proof is based on \cite{Akemann_Vernizzi} which exploits the integrability of the Elliptic Ginibre Ensemble.
\end{remark}
\vspace{2mm}
\noindent
Our goal is to control
$\E [| f_{n}(z) |^2]$ and, for this, we will
begin by giving an explicit expression of the second moment using
Hermite polynomials. 

\begin{definition}[Hermite polynomials]
    The \textit{Hermite polynomials} $\{ H_n \}_{n \geq 0}$ are the monic orthogonal polynomials 
    with respect to the measure $\ed^{-x^2/2} \ddiff x$ on $\R$ so that 
    \begin{equation*}
    \int_{\R} H_n(x) H_m(x) \ed^{-\frac{x^2}{2}}\ddiff x = \sqrt{2\pi} n! \delta_{n,m}.
    \end{equation*}
\end{definition}

\noindent
Recall that $g_t(z) = z^{-1} + tz$. 

\begin{lemma}[Hermite expression of the characteristic polynomial]
\label{lemma:hermite_expression}
For $n \geq 1$ and $z \in \D \setminus \{0\}$, one has the following expression
    \begin{equation}
    \label{eq:hermite_expression}
    \E [| f_{n}(z) |^2] = \frac{n!|z|^{2n}}{n^n}  \left|\ed^{-ntz^2}\right| 
    \sum_{k=0}^n \frac{t^k}{k!} \left| H_{k}\left(\sqrt{\frac{n}{t}}g_t(z) \right) \right|^2.
\end{equation}
\end{lemma}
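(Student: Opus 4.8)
The plan is to use the known eigenvalue correlation structure of the Elliptic Ginibre Ensemble, which is a determinantal point process whose correlation kernel is built from rescaled Hermite polynomials. Concretely, writing $\lambda_1,\dots,\lambda_n$ for the eigenvalues of $A_{n,t}$, the joint density is proportional to $\prod_{i<j}|\lambda_i-\lambda_j|^2$ times a product of weights involving $\exp(-\frac{1}{1-t^2}(|\lambda|^2-\frac t2(\lambda^2+\bar\lambda^2)))$, and the monic orthogonal polynomials for this planar weight are $p_k(\lambda)= (t/?)^{k/2}He_k(\lambda/\sqrt{t})$ up to explicit normalising constants (this is the classical fact going back to the planar orthogonality of Hermite polynomials; see the references \cite{9,16,7} cited above). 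Since $f_{n,t}(z)$ involves $\det(1+tz^2-\frac{z}{\sqrt n}A_{n,t}) = (\frac{z}{\sqrt n})^n\prod_i(g_t(z)\sqrt n - \lambda_i)$ after pulling out $z/\sqrt n$ from each factor and using $1+tz^2 = z\, g_t(z)$, the quantity $\E[|f_{n,t}(z)|^2]$ is, up to the explicit prefactor $\frac{n!|z|^{2n}}{n^n}|\ed^{-ntz^2}|$, exactly the expected squared modulus of the characteristic polynomial of the EGE evaluated at the point $w = \sqrt n\, g_t(z)$.

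The key step is then the standard formula for the average of $|\det(w-A)|^2$ against a determinantal eigenvalue ensemble: for a biorthogonal ensemble with monic orthogonal polynomials $(p_k)_{k\le n-1}$ of norms $\|p_k\|^2 = h_k$, one has $\E\big[\prod_i |w-\lambda_i|^2\big] = h_n^{-1}\,? $ — more precisely, the Heine-type identity gives $\E[\prod_i(w-\lambda_i)] = p_n(w)$, and for the squared modulus one uses the Christoffel–Darboux / Andréief expansion to get $\E[\prod_i|w-\lambda_i|^2] = \frac{n!}{\prod_{k=0}^{n-1}h_k}\det\big(\text{something}\big)$, which collapses, because $p_n(w)\overline{p_n(w)}$ appears together with the lower kernel, to $\sum_{k=0}^{n}\frac{1}{h_k}|p_k(w)|^2$ times $\prod_{k=0}^{n-1}h_k$ appropriately cancelled. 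I would carry this out by expanding $\prod_i|w-\lambda_i|^2 = \det(w-\lambda_i)\det(\overline{w-\lambda_j})$, incorporating these two Vandermonde-like determinants into the two Vandermonde factors of the density via Andréief's identity, and recognising the resulting $n\times n$ Gram determinant as that of the augmented family $(p_0,\dots,p_{n-1},p_n)$ in both slots, which by orthogonality is diagonal with the $|p_k(w)|^2/h_k$ terms surviving. Substituting $p_k(\lambda) = t^{k/2}He_k(\lambda/\sqrt t)$ (with the matching $h_k$ proportional to $t^k k!$) and $w=\sqrt n\, g_t(z)$ turns $p_k(w)$ into $t^{k/2}He_k(\sqrt{n/t}\,g_t(z))$ and the weights into the $t^k/k!$ appearing in \eqref{eq:hermite_expression}.

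The main obstacle is bookkeeping the constants: one must track the precise normalisation of the planar Hermite orthogonality (the $1-t^2$ in the weight, the rescaling $\lambda\mapsto\lambda/\sqrt t$, and the partition function of the EGE) so that all $t$-powers, factorials, and the $n^n$ and Gaussian prefactors combine exactly into $\frac{n!|z|^{2n}}{n^n}|\ed^{-ntz^2}|\sum_{k=0}^n\frac{t^k}{k!}|He_k(\sqrt{n/t}\,g_t(z))|^2$. In particular one should double-check that the $k=n$ term is included (the sum runs to $n$, not $n-1$), which is the signature of taking $|\cdot|^2$ rather than the bare expectation, and that the factor $|\ed^{-ntz^2}|$ — rather than $\ed^{-n t z^2}$ — arises correctly from $|\ed^{-ntz^2/2}|^2$ in the definition of $f_{n,t}$ combined with the Gaussian weight completing a square when $w$ is plugged in. I expect no conceptual difficulty beyond this; the identity is a clean instance of the Heine/Andréief machinery specialised to the Hermite weight.
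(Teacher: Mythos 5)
Your proposal follows essentially the same route as the paper: identify the planar orthogonal polynomials for the elliptic weight as $t^{k/2}He_k(\cdot/\sqrt{t})$ with squared norms $k!$, invoke the identity $\E[\det(u-M)\overline{\det(v-M)}]=c_n\sum_{k=0}^{n}P_k(u)\overline{P_k(v)}$ (which the paper simply cites from \cite{9} instead of re-deriving via the Heine/Andr\'eief expansion you sketch), and substitute $u=v=\sqrt{n}\,g_t(z)$ after factoring $1+tz^2-\frac{z}{\sqrt n}\lambda=\frac{z}{\sqrt n}(\sqrt n\,g_t(z)-\lambda)$. The only bookkeeping slip is your guess $h_k\propto t^k k!$: with the monic choice $t^{k/2}He_k(\lambda/\sqrt t)$ one has $h_k=k!$, which is precisely what yields the weights $t^k/k!$ in \eqref{eq:hermite_expression}.
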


\begin{proof}
	In the case of the Elliptic Ginibre Ensemble given by 
	\eqref{eq:elliptic_sum_gue}, the matrix $A_{n,t}$ has
	the following density, which can be found in \cite[eq. (4)]{Akemann_Duits_Molag}.
	
	\begin{equation}
		\label{law_matrix_ellipse}
		\ddiff \proba_t(M) = \left( \frac{1}{\pi \sqrt{1-t^2}} \right)^{n^2} 
		\exp \left( - \frac{1}{1-t^2} \mathrm{Tr} \left[MM^* - \frac{t}{2}(M^2 + (M^*)^2) \right] \right)\ddiff M
	\end{equation}
	which has the form $\ddiff \proba_t(M) = w_t(M)\ddiff M$ associated to the weight function 
	\begin{eqnarray}
		w_t(z)             & = & \frac{1}{\pi \sqrt{1 - t^2}} 
		\exp{ \left( - \frac{1}{1-t^2} \left(|z|^2 - \frac{t}{2}(z^2 + \overline{z}^2) \right) \right) }  \nonumber\\
		& = & \frac{1}{\pi \sqrt{1 - t^2}} 
		\exp{ \left( - \left( \frac{x^2}{1+t} + \frac{y^2}{1-t} \right) \right)} \nonumber
	\end{eqnarray}
	with $x = \mathrm{Re}(z)$ and $y= \mathrm{Im}(z)$. 
	In order to use the main theorem of \cite{Akemann_Vernizzi}, we should compute the orthonormal 
	polynomials with respect to $w_t(z) \ddiff z$. Using \cite[eq. (3)]{Akemann_Duits_Molag}, 
	these polynomials are $\{ P_n \}_{n \geq 0}$ given by 
	
	\begin{equation}
		P_n(z) = \frac{\sqrt{t^n}}{\sqrt{n!}} H_n \left( \frac{z}{\sqrt{t}} \right).
	\end{equation}
	 For $M$ sampled from \eqref{law_matrix_ellipse}
	and any $u,v \in \C$, we may use \cite[eq. (2.11)]{Akemann_Vernizzi} to get
	
	\begin{equation}
		\E \left[ \det(u-M) \overline{\det(v-M)} \right] = n! \sum_{k=0}^n P_k(u) \overline{P_k(v)},
	\end{equation}
	where the global factor $n!$ is the square inverse of the dominant coefficient
	of $P_n$.
	\noindent
	Finally, setting $u=v = g_t(z) = z^{-1}+tz$ gives
	\begin{align*}
		\E [| f_{n}(z) |^2] &= 
		\E \bigg[\bigg| \ed^{-\frac{ntz^2}{2}} \left( \frac{z}{\sqrt{n}} \right)^n 
		\det \left( \sqrt{n}(z^{-1}+tz) - A_{n,t} \right) \bigg|^2 \bigg]											\\
		&=\frac{n! |z|^{2n}}{n^n} \left| 
		\ed^{-ntz^2} \right| \sum_{k=0}^n \frac{t^k}{k!} \left| H_k \left( \sqrt{\frac{n}{t}}g_t(z)\right) \right|^2
	\end{align*}
	which is the desired expression of $\E [| f_{n,t}(z) |^2]$ in terms of Hermite polynomials.
	
\end{proof}

\noindent
With the help of the expression \eqref{eq:hermite_expression} and using 
the results from \cite{Akemann_Duits_Molag}, we will control 
$\E [| f_{n}(z) |^2]$ uniformly on bounded sets. In fact,
\cite{Akemann_Duits_Molag} allows us to give an 
explicit expression for the limit of
$\E [| f_{n}(z) |^2]$. Since we do not need 
an explicit expression, we will only state the following.

\begin{lemma}[Convergence of the second moment]
\label{lemma:asymptotics_second_moment}
 There exists 
 a strictly positive continuous function
 $\mathcal F: \mathbb D \setminus \{0\} \to (0,\infty)$ 
 such that, uniformly on compact sets,
\[\E [| f_{n}|^2] \xrightarrow[n \to \infty]{} \mathcal F.\]
\end{lemma}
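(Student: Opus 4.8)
The plan is to start from the exact identity (\ref{eq:hermite_expression}) of Lemma~\ref{lemma:hermite_expression} and to track the large-$n$ behaviour of its three ingredients separately. Writing, for $z \in \D \setminus \{0\}$,
\[
\E[|f_{n,t}(z)|^{2}] = \frac{n!}{n^{n}}\;|z|^{2n}\,\bigl|\ed^{-ntz^{2}}\bigr|\;S_{n}(z), \qquad S_{n}(z) := \sum_{k=0}^{n} \frac{t^{k}}{k!}\,\left| He_{k}\!\left( \sqrt{\tfrac{n}{t}}\, g_t(z) \right) \right|^{2},
\]
the first step is to recognise $S_{n}(z)$ as, up to the Gaussian weight $\ed^{-n|g_t(z)|^{2}/(2t)}$ hidden in the combination $|z|^{2n}\bigl|\ed^{-ntz^{2}}\bigr|$ and up to an explicit elementary prefactor, the diagonal value of the correlation kernel of the (unrescaled) Elliptic Ginibre Ensemble at the exterior point $u = g_t(z) \in \C \setminus E_t$. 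Indeed that kernel is built precisely from partial sums $\sum_{k \le n-1} \frac{t^{k}}{k!} He_{k}(\cdot)\overline{He_{k}(\cdot)}$ against the appropriate weight; the only discrepancy is the single extra term $k=n$ in $S_{n}$, which is of the same order as the dominant terms and so contributes a bounded nonzero factor that must be kept. This bookkeeping is routine but has to be carried out carefully to match the conventions of \cite{24}: the Hermite polynomials $He_{k}$ here are normalised by the weight $\ed^{-x^{2}/2}$, and the number of summands and the overall constants must be reconciled.

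The second step is to feed in the exterior-region asymptotics of the Hermite kernel from \cite{24}. Uniformly for $u$ in a compact subset of $\C \setminus E_t$, these give an expansion of the form $K_{n,t}(\sqrt n\,u,\sqrt n\,u) = n^{-1/2}\,b_t(u)\,\ed^{\,n\,\psi_t(u)}\,(1+o(1))$ with $b_t > 0$ continuous and $\psi_t$ an explicit ``exterior'' potential. Combined with Stirling's formula $\tfrac{n!}{n^{n}} = \sqrt{2\pi n}\,\ed^{-n}(1+o(1))$, the factor $n^{-1/2}$ coming from the kernel cancels the $\sqrt n$ coming from Stirling, while the three exponential contributions — $\ed^{-n}$ from Stirling, $|z|^{2n}\bigl|\ed^{-ntz^{2}}\bigr|$, and $\ed^{\,n\,\psi_t(g_t(z))}$ together with the elementary weight from Step~1 — must cancel identically on $\D \setminus \{0\}$. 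This exact cancellation of the exponential growth is precisely the reason the correction factor $\ed^{-ntz^{2}/2}$, together with the implicit $z^{n}$, appears in the definition (\ref{def:f_n_t}) of $f_{n,t}$, cf.\ Remark~\ref{remark:factor}. The surviving subexponential quantities I would collect into a single limit function $\mathcal F(z)$.

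It then remains to read off the properties of $\mathcal F$. By construction it is a finite product and quotient of nonvanishing elementary (indeed holomorphic) functions of $z$ on $\D \setminus \{0\}$, hence continuous; and $\mathcal F(z) \in (0,\infty)$ because it is a limit of positive quantities whose leading coefficients — $\sqrt{2\pi}$, the elementary prefactor of Step~1, and $b_t(g_t(z))$ — are all strictly positive on $\D \setminus \{0\}$. The convergence is uniform on every compact $K \subset \D \setminus \{0\}$: the map $z \mapsto g_t(z)$ sends such a $K$ to a compact subset of $\C \setminus E_t$ that stays bounded and at a positive distance from $\partial E_t$, so the expansion of \cite{24} applies uniformly there, and the Stirling estimate is uniform as well.

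I expect the main obstacle to be Step~2: extracting from \cite{24} the exact exterior-region statement in the normalisation forced by (\ref{eq:hermite_expression}), and then verifying that the exponential factors and the powers of $n^{1/2}$ cancel to exactly the needed order so that no spurious power of $n$ is left in front of $\mathcal F$. The reconciliation of Hermite-polynomial normalisations and the careful treatment of the boundary term $k=n$ in $S_{n}$ are where an error would most easily slip in. A secondary, milder point is that the asymptotics of \cite{24} are stated region by region, so one must confirm that the exterior estimate holds uniformly up to — but not touching — $\partial E_t$; this is all that is needed here, since the excluded points $z = 0$ and $z \in \partial \D$ are exactly the ones that would push $g_t(z)$ to infinity or onto $\partial E_t$.
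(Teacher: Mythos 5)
Your plan is essentially the paper's proof: start from the exact Hermite identity of Lemma \ref{lemma:hermite_expression}, insert the exterior asymptotics of the Hermite kernel from \cite{24} (uniform on compact subsets of $\mathbb C \setminus E_t$), apply Stirling's formula, and verify that all exponential factors cancel — the paper carries out exactly the computation you defer, showing $F_u(t|z|^2) = 1 + t\,\mathrm{Re}(z^2) - \log(|z|^2)$, which is what makes $|z|^{2n}\,|\ed^{-ntz^2}|\,\ed^{-n}\,\ed^{nF_u(t|z|^2)}$ collapse to $1$, while the $\sqrt{2\pi n}$ from Stirling cancels the $n^{-1/2}$ from the kernel.

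The one point where your shortcut needs more than you claim is the off-by-one bookkeeping. You split off the $k=n$ term and call it ``a bounded nonzero factor that must be kept''; boundedness is not sufficient, since the lemma asserts convergence to a positive continuous limit, so you would additionally need the exterior (Plancherel--Rotach type) asymptotics of the single polynomial $He_n\left(\sqrt{n/t}\,u\right)$ to see that this factor actually converges. The paper instead absorbs the mismatch by writing $\sum_{k=0}^{n}\frac{t^k}{k!}\,\big|He_k\big(\sqrt{n/t}\,g_t(z)\big)\big|^2 = L_{n+1}\big(\sqrt{\tfrac{n}{n+1}}\,g_t(z)\big)$, where $L_n(u)=\sum_{k=0}^{n-1}\frac{t^k}{k!}\big|He_k\big(\sqrt{n/t}\,u\big)\big|^2$, i.e.\ by rescaling the argument rather than adding a term, and then proves that $L_{n+1}\big(\sqrt{n/(n+1)}\,u\big)/L_n(u)$ converges uniformly to a nowhere-vanishing function, the only delicate piece being the exponential $\exp\big((n+1)G\big(\sqrt{\tfrac{n}{n+1}}\,u\big)-nG(u)\big) \to \exp\big(G(u)-\tfrac12\langle \nabla G(u),u\rangle\big)$ with $G(w)=F_w\big(t|g_t^{-1}(w)|^2\big)$. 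Either repair works and is routine, but some such convergence argument must replace your ``routine bookkeeping''; the rest of your outline (uniformity on compacts of $\mathbb D\setminus\{0\}$, positivity and continuity of the limit) matches the paper.
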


\noindent
Since $f_{n}$ is holomorphic on
$\mathbb D$, the function 
$| f_{n}|^2$ on the circle of radius $r \in (0,1)$ controls $|f_{n}|^2$
on the disk $\D_r = \{ z \in \C: |z| < r \}$ for 
$r \in (0,1)$. This is written in the next proposition.

\begin{proposition}[Uniform control]
\label{proposition:uniform_control}
   	For every $r \in (0,1)$ there exists
    $C_r > 0$ such that
    \[\E[\|f_{n}\|_{\D_r}^2]\leq C_r \mbox{ for every } n \geq 1.\]
\end{proposition}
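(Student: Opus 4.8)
The plan is to reduce the statement to the already-established convergence result of Lemma \ref{lemma:asymptotics_second_moment}, using the holomorphicity of $f_{n,t}$ to handle the puncture at the origin. First I would fix $r \in (0,1)$ and pick some intermediate radius $\rho$ with $r < \rho < 1$. On the annulus $\{ \rho/2 \le |z| \le \rho \}$, which is a compact subset of $\mathbb{D} \setminus \{0\}$, Lemma \ref{lemma:asymptotics_second_moment} gives that $\mathbb{E}[|f_{n,t}|^2]$ converges uniformly to the continuous function $\mathcal{F}$; in particular the sequence $n \mapsto \sup_{\rho/2 \le |z| \le \rho} \mathbb{E}[|f_{n,t}(z)|^2]$ is bounded, say by some constant $M_\rho$. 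By Remark \ref{remark:from_max_to_exp} (subharmonicity of $|f_{n,t}|^2$ together with \cite[Lemma 2.6]{4}), this is equivalent to the boundedness of $\mathbb{E}[\|f_{n,t}\|_{K}^2]$ for $K$ any compact subset of this annulus, so we can also extract a bound on $\mathbb{E}[\sup_{|z| = \rho} |f_{n,t}(z)|^2]$.

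The key point is then that $f_{n,t}$ is holomorphic on the whole disk $\mathbb{D}$, not merely on $\mathbb{D} \setminus \{0\}$: indeed $\det(1 + tz^2 - \frac{z}{\sqrt{n}} A_{n,t}) e^{-ntz^2/2}$ is manifestly entire in $z$. Hence the maximum modulus principle applies to the closed disk $\overline{D_\rho}$, giving $\|f_{n,t}\|_{D_r} \le \|f_{n,t}\|_{D_\rho} = \sup_{|z| = \rho} |f_{n,t}(z)|$ pathwise. Taking squared expectations, $\mathbb{E}[\|f_{n,t}\|_{D_r}^2] \le \mathbb{E}[\sup_{|z|=\rho}|f_{n,t}(z)|^2] \le C_\rho$ for a constant $C_\rho$ depending only on $\rho$ (hence only on $r$, once $\rho$ is chosen as a function of $r$), uniformly in $n$. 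Setting $C_r := C_\rho$ finishes the proof.

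I do not expect a genuine obstacle here: all the analytic heavy lifting — the Hermite expression of Lemma \ref{lemma:hermite_expression}, the uniform estimates on the Hermite kernel drawn from \cite{24}, and the resulting convergence in Lemma \ref{lemma:asymptotics_second_moment} — is assumed. The only thing to be slightly careful about is that Lemma \ref{lemma:asymptotics_second_moment} is stated on $\mathbb{D} \setminus \{0\}$, so one must not try to apply it directly on a disk containing $0$; the circle $\{|z| = \rho\}$ circumvents this, and the maximum principle then transports the bound inward across the origin. An alternative, essentially equivalent route would be to apply the mean value / subharmonicity inequality directly: $|f_{n,t}(z)|^2$ is subharmonic, so $\sup_{|z|\le r}|f_{n,t}(z)|^2 \le \sup_{|z|=\rho}|f_{n,t}(z)|^2$, and one again reduces to a bound on the circle $|z|=\rho$.
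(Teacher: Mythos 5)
Your argument is correct and is essentially the paper's own proof: a bound on $\E[|f_{n,t}(z)|^2]$ on a compact subset of $\D \setminus \{0\}$ from Lemma \ref{lemma:asymptotics_second_moment}, upgraded to a bound on the supremum over a circle via Remark \ref{remark:from_max_to_exp}, and then transported across the origin by the maximum modulus principle using the holomorphicity of $f_{n,t}$ on all of $\D$. The only cosmetic difference is that the paper bounds $\|f_{n,t}\|_{D_r}$ by $\|f_{n,t}\|_{\partial D_r}$ directly, while you use an intermediate radius $\rho>r$; both work identically.
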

\begin{proof}
	By Lemma \ref{lemma:asymptotics_second_moment},
	we have a bound for
	$\mathbb E [|f_{n}(z)|^2]$
	on compact sets of $\mathbb D\setminus \{0\}$.
	This is the same as a bound for
	$\mathbb E [\|f_{n} \|_K^2]$
	for compact sets $K \subset \mathbb D \setminus \{0\}$ by
	Remark \ref{remark:from_max_to_exp}. 
	We may obtain a bound for
	$\mathbb E[\|f_{n}\|_{\D_r}^2]$ for $r \in (0,1)$ by using that
	$\|f_{n}\|_{\D_r} \leq \|f_{n}\|_{\partial \D_r}$
	thanks to the maximum modulus principle.
\end{proof}

\begin{proof}[Proof of Lemma \ref{lemma:asymptotics_second_moment}]
	
	Recall the function
	$g_t: \mathbb D \to \mathbb C\setminus \mathcal E_t$
	given by $g_t(z) =\frac{1}{z}+tz $
	and define $L_n:\mathbb C \setminus \mathcal E_t \to [0,\infty)$ by
	
	\begin{equation*}
		L_n(u)=\sum_{k=0}^{n-1} \frac{t^k}{k!}
		\left|H_k \left(\sqrt {\frac{n}{t}} u \right)\right|^2.
	\end{equation*}
	By using the contour integral representation around a small
	loop enclosing the origin, 
	\[L_n(u)=
	\frac{1}{2\pi i}
	\oint \displaylimits_0 \frac{\ed^{nF_u(s)}}{t - s} \frac{\mathrm d s}{\sqrt{1-s^2}},
	\quad \mbox{ with } \quad
	F_u(s) = \frac{s}{t} \left(\frac{\mathrm{Re}(u)^2}{1+s}
	+ \frac{\mathrm{Im}(u)^2}{1-s} \right)
	- \log s + \log t,\]
	the following has been proved in 
	\cite[Theorem II.12, (i)]{Akemann_Duits_Molag}
	and \cite[Theorem II.13, (i)]{Akemann_Duits_Molag} for
	$u \in \mathbb C \setminus \mathcal E_t$ and $z=g_t^{-1}(u)$,
	\begin{equation}
		\label{eq:Asymptotics}
		L_n(u)
		= \frac{1}{2\pi }
		\sqrt{\frac{2\pi}{n F_u''(t |z|^2)}}
		\frac{\ed^{n F_u(t|z|^2)}}{\sqrt{1-t^2 |z|^4}}
		\frac{1}{ t(1 - |z|^2)}
		\left(1 + O\left(\frac{1}{n} \right) \right),
	\end{equation}
	where the error term is uniform on compact sets
	of $\mathbb C \setminus \mathcal E_t$.
	Here $s=t|z|^2$ is a critical point of $F_u$ which can be seen by first
	calculating
	\begin{align*}
		tF_u'(s) &= \frac{\mathrm{Re}(u)^2}{1+s} +  \frac{\mathrm{Im}(u)^2}{1-s}
	+ s \Big(\hspace{-1mm} - \frac{\mathrm{Re}(u)^2}{(1+s)^2} + \frac{\mathrm{Im}(u)^2}{(1-s)^2}
	\Big)
		- \frac{t}{s}																									\\
		&=
		\frac{\mathrm{Re}(u)^2}{(1+s)^2} +  \frac{\mathrm{Im}(u)^2}{(1-s)^2}
		-\frac{t}{s}																										
		\\
		&=		\frac{t}{s}\Bigg[
		\frac{\mathrm{Re}(u)^2}{\Big(\frac{1}{\sqrt{s/t}}+t \sqrt{s/t}\Big)^2} +  
		\frac{\mathrm{Im}(u)^2}{\Big(\frac{1}{\sqrt{s/t}}-t \sqrt{s/t}\Big)^2}
		-1
		\Bigg]
	\end{align*}
	and then noticing  that $g_t$ sends $\{z \in \mathbb C: |z| = r\}$
	to 
		$\big\{u \in \mathbb C: \frac{\mathrm{Re}(u)^2}{(\frac{1}{r} + tr)^2} + 
		\frac{\mathrm{Im}(u)^2}{(\frac{1}{r} - tr)^2}= 1 \big\} \vspace{-0.7mm} $. 
	Moreover,
	we can find
		\begin{align*}
		F_u(t |z|^2) &=
		|z|^2 \left(
		\frac{\mathrm{Re}\left(u \right)^2}{1+t|z|^2}
		+
		\frac{\mathrm{Im} \left(u \right)^2}{1-t|z|^2}
		\right)
		- \log(t |z|^2) + \log t												\\	
		&=1+ t|z|^4 \left(
		\frac{\mathrm{Re}\left(u\right)^2}{(1+t|z|^2)^2}
		-
		\frac{\mathrm{Im} \left(u \right)^2}{(1-t|z|^2)^2}
		\right)
		- \log(|z|^2) 																	\\
		&=1 + t \mathrm{Re}(z^2) - \log(|z|^2),									
	\end{align*}
	where for the second equality we have used
	that $F_u'(t|z|^2) = 0$ to simplify the calculation
	and for the third equality we have used  
	that $u = z^{-1} + tz$ allows us to relate the real parts
	$(t|z|^2 + 1)\mathrm{Re}(z) = |z|^2\mathrm{Re}(u)$
	and the imaginary parts
	$(t|z|^2 - 1) \mathrm{Im}(z) = |z|^2\mathrm{Im}(u)$.
	In our case
	we need to control the second moment
	\begin{align*}
		\mathbb E [|f_{n}(z)|^2] &= 
		\frac{n!|z|^{2n}}{n^n} \ed^{-nt \mathrm{Re}(z^2)} \sum_{k=0}^n \frac{t^k}{k!} \left| H_k
		\left(\sqrt{\frac{n}{t}}\left( \frac{1}{z}+tz \right) \right) \right|^2 			\\
		&=
		\frac{n!|z|^{2n}}{n^n} \ed^{-nt \mathrm{Re}(z^2)} L_{n+1}
		\left(\sqrt \frac{n}{n+1} g_t(z)
		\right).
	\end{align*}
	By \eqref{eq:Asymptotics} 
	and Stirling's formula, we immediately notice that
	\begin{equation*}
		\frac{n!|z|^{2n}}{n^n} \ed^{-nt \mathrm{Re}(z^2)} L_{n}
		(g_t(z)) \xrightarrow[n \to \infty]{}
	\frac{1}{ \sqrt{ F_u''(t |z|^2)(1-t^2 |z|^4)}
		t(1 - |z|^2)}.
		\end{equation*}
	uniformly on compact sets of $\mathbb D\setminus \{0\}$.
	It is now enough to notice that the quotient 
    $$L_{n+1} (\sqrt{n/(n+1)}g_t(z))/L_{n}(g_t(z))$$
	converges uniformly on compact sets
	towards a nowhere zero function. 
	To this end, we must compute the limit of  
	$\exp((n+1) G(\sqrt{n/(n+1)}u)
	- n G(u)),$
	where $G(w) =F_w(t|g_t^{-1}(w)|^2) $. But,
	for $u$ uniformly on compact sets 
	of $\mathbb C \setminus \mathcal E_t$,
	\[
	\ed^{(n+1) G(\sqrt{\frac{n}{n+1}} u)
		- n G(u)} \xrightarrow[n \to \infty]{} \ed^{G(u) - \frac{1}{2}\langle \nabla G(u), u\rangle}\]
	so that the proof is complete 
	with $\mathcal{F}(z) = 
	\frac{\ed^{G(u) - \frac{1}{2}\langle \nabla G(u), u\rangle}}{ \sqrt{ F_u''(t |z|^2)(1-t^2 |z|^4)}
		t(1 - |z|^2)}$, $u = g_t(z)$.
\end{proof}

\begin{proof}[Conclusion of the proof of Theorem \ref{theorem:tightness}]

For every compact subset $K \subset \D$, Proposition \ref{proposition:uniform_control} 
implies that $(||f_n||_K)_{n \geq 1}$ has its second moment uniformly bounded. 
The sequence is therefore tight from which one derives the tightness of $\{f_n\}_{n \geq 1}$ 
using Lemma \ref{lemma:reduction_to_unif_control} and Remark \ref{remark:from_max_to_exp}.

\end{proof}
\noindent

\subsection{Convergence of the coefficients: Proof of Theorem \ref{proposition:convergence_to_gaussian}}
\label{section_convergence_coefs}

We will divide the proof in three parts. First, we show that the expected value
$E\big[ U_k^{(n)} \big]$ converge to zero. 
As a second step we show that the fluctuations are Gaussian.
Finally, we identify the covariance.
We will deal with traces of polynomials of $A_n$. Recall that 
 \begin{equation*}
	\mathrm{Tr}[A_n^k] = \hspace{-2mm} \sum_{(i_1, \dots, i_k) \in \{1,\dots,n\}^k}
	\hspace{-2mm} a_{i_1  i_2} a_{i_2  i_3} \dots a_{i_{k-1}  i_k} a_{i_k, i_1}.
\end{equation*}
It is convenient
to think of $(i_1,\dots,i_k)$ as a path of length $k$ with values in $\{1,\dots,n\}$.
We use the notation $[m] = \{1,\dots,m\}$ which will be thought
of as the abelian group $\mathbb Z/m\mathbb Z$ when performing additions,
and for $\psi:[k] \to [n]$ we denote
$a_\psi = \prod_{i=1}^k a_{\psi(i) \psi(i+1)}$.
Due to the invariance under permutations of the law of 
the entries in the matrix $A_n$,
quantities such as
$\mathbb E[a_\psi]$ or $\mathbb E[a_\psi a_\varphi]$
are invariant under $\psi \mapsto T\circ \psi$ for any permutation
$T: [n] \to [n]$, i.e., 
$\mathbb E[a_\psi] = 
\mathbb E[a_{T \circ \psi}]$ or $
\mathbb E[a_\psi a_\varphi]=\mathbb E[a_{T \circ \psi} a_{T \circ \varphi}]$.
In the following, we may identify
$\psi$ and $T \circ \psi$ or $(\psi,\varphi)$ and $(T \circ \psi, T \circ \varphi)$
for some purposes. \\
\\
One can describe these
equivalence classes by considering the 
partition induced by $\psi$ or by $(\psi,\varphi)$ as follows.
Let $k_1,\dots,k_\ell$ be positive integers and let us use the notation
$\mathbf k = (k_1,\dots,k_\ell)$.
First recall that the information of
$\ell$ maps $(\psi_j:[k_j] \to [n])_{1 \leq j\leq \ell}$
is contained in the map 
$\psi = \sqcup \psi_j: \sqcup_{j=1}^\ell [k_j] \to [n]$.
For simplicity, we denote $[\mathbf k] = \sqcup_{j=1}^\ell [k_j]$. \\

\begin{definition}[Path pattern]
Let $\psi : [\mathbf k] \to [n]$.
	The \textit{path pattern} traced by $\psi$,
	or the \textit{partition} induced by $\psi$
	is the partition of $[k]$ given by  
	\begin{equation*}
		\pi_\psi \coloneqq \{\psi^{-1}(x): x \in 
	\psi([\mathbf k]) \}.
	\end{equation*} 
\end{definition}

\noindent
We denote by $\mathcal P(\mathbf k)$ 
the set of path patterns, or partitions, of $[\mathbf k]$.
We use these path patterns to write, for instance,
 \begin{equation} \label{eq:Api}
	\mathrm{Tr}[A_n^k] = \sum_{\pi \in \mathcal P(k)} 
	\mathcal A_\pi^{(n)}, \quad \mbox{ where } 
	\mathcal A_\pi^{(n)} \coloneqq \sum_{\substack{\psi:[k] \to [n] \vspace{0.5mm}  \\ \pi_\psi = \pi}} a_\psi.
\end{equation}
Since the set $\mathcal P(k)$ of possible path patterns 
is finite, it will be convenient to study
the limit of $\mathcal A_\pi^{(n)}$ for a given $\pi \in \mathcal P(\mathbf k)$. 
We chose to consider
path patterns because they are convenient to count.
We proceed to define
the directed multigraph $G_\psi$
associated to a map $\psi = \sqcup_{j=1}^\ell \psi_j :[\mathbf k] \to [n]$
in a more explicit way in Definition \ref{def:graph_psi}. 
The motivation is that 
$\mathbb E[a_\psi]$ depends only on isomorphism class of
the graph $G_\psi$. \\

\begin{definition}[Graph of $\psi$]
	\label{def:graph_psi}
	Let $\psi = \sqcup_{j=1}^\ell \psi_j :[\mathbf k] \to [n]$. 
	The graph $G_\psi = (V,E,s,t)$ is the directed multigraph having
	vertex set $V = \psi([\mathbf k])$,
	edge set $E = \sqcup_{j=1}^\ell E_j$ where we defined
	$E_j = \{(i,i+1): i \in [k_j]\}$,
	source map $s = \sqcup_{j=1}^\ell s_j$
	with $s_j:E_j \to V$ given by $s(i,i+1) = \psi_j(i)$
	and target map
	$t = \sqcup_{j=1}^\ell t_j$
	with $t_j:E_j \to V$ given by $t(i,i+1) = \psi_j(i+1)$. \\
\end{definition}

\noindent
More concisely but equivalently for our purposes,
if $I_j: [k_j] \to [\mathbf k]$ are the canonical inclusion maps,
we may say that $G_\psi$ is the directed multigraph with vertex set
$V = \psi([\mathbf k])$ and  edge multiset
$\{(I_j(i),I_j(i+1)): j \in [\ell], i \in [k_j] \}$.
For a directed multigraph $G = (V,E,s,t)$, denote by $\overline{G} = (V, \overline{E})$ 
the undirected simple graph with edge set $\overline E = \{\{s(e),t(e)\} :
e \in E \}$. 
So, $\overline G_\psi$ is the undirected simple graph
(with possible loops)
induced by $\psi$.\\
\\
Notice that the isomorphism class of $G_\psi$ depends only on
the path pattern $\pi \in \mathcal P(\mathbf k)$ induced by $\psi$.
We denote by $G_\pi$ this isomorphism class.
When dealing with $G_\pi$ 
for purposes where the quantities only depend on the 
path pattern, we may write $V_\pi$
and $E_\pi$ for its set of vertices and edges viewed as
$V_\psi$ and $E_\psi$ for some $\psi$ inducing the partition $\pi$.
The same goes for $\overline G_\pi$.
Moreover, if $\psi = \sqcup_{i=1}^\ell \psi_j$ and $\pi = \pi_\psi$,
we can consider $\pi_j$, the path pattern induced by $\psi_j$
or, what is the same, the partition induced
by $\pi$ via the canonical inclusion $I_j:[k_j] \to [\mathbf k]$.
It will then be convenient to see $G_{\pi_i}$ as a subgraph of $G_\pi$
and $\overline G_{\pi_i}$ as a subgraph of $\overline G_\pi$.
This suggest to consider
the equivalence class of $(G_\psi,G_{\psi_1},\dots,G_{\psi_\ell})$
and viewing $G_{\pi_i}$ as a subgraph of $G_\pi$ by taking 
representatives of this equivalence class.\\
\\
We will see below in the proof of 
Lemma \ref{lemma:expectation_monomial} and in Proposition 
\ref{prop:GaussianLimit} the following
behaviors for
$n^{-k/2}\mathcal A_\pi^{(n)}$.
The first two cases correspond to the random part in the 
limit whereas the last two cases give deterministic contributions.

\begin{enumerate}
	\item If $\overline G_\pi$ is unicyclic, i.e., if 
	it contains one and only one cycle (that may be a loop), with each edge of the cycle
	being traversed just once 
	and every edge outside the cycle being traversed
	once in each direction in $G_\pi$, then
	$n^{-k/2}\mathcal A_\pi^{(n)}$ converges to a centered Gaussian. 
	\item If $\overline G_\pi$ is a tree where each edge is 
	traversed twice in $G_\pi$, once in each direction
	($\pi$ is called a rooted plane tree in this case, 
	see Definition \ref{def:rooted_plane_tree_and_unicyclic}), then
	$n^{-k/2}\mathcal A_\pi^{(n)} - n $ converges to a non-centered Gaussian.
	This case can be thought of as a unicyclic graph with a cycle of length two.
	\item If $\overline G_\pi$ is a tree but there is an edge traversed
	four times in $G_\pi$, two in each direction, then 
	$n^{-k/2}\mathcal A_\pi^{(n)}$ converges to a constant.
	Those $\pi$ can be thought of as rooted plane trees
	with two vertices at distance two identified.
	\item If $\overline G_\pi$ is unicyclic but with each 
	edge traversed two times, once in each direction,
	then $n^{-k/2}\mathcal A_\pi^{(n)}$ also converges to a constant.
	Those $\pi$ can be thought of as rooted plane trees
	with two  vertices at distance different from two identified.
	\item All other cases converge to zero.
\end{enumerate}

\begin{figure}[ht] 
		\centering
		\includegraphics[scale=0.5]{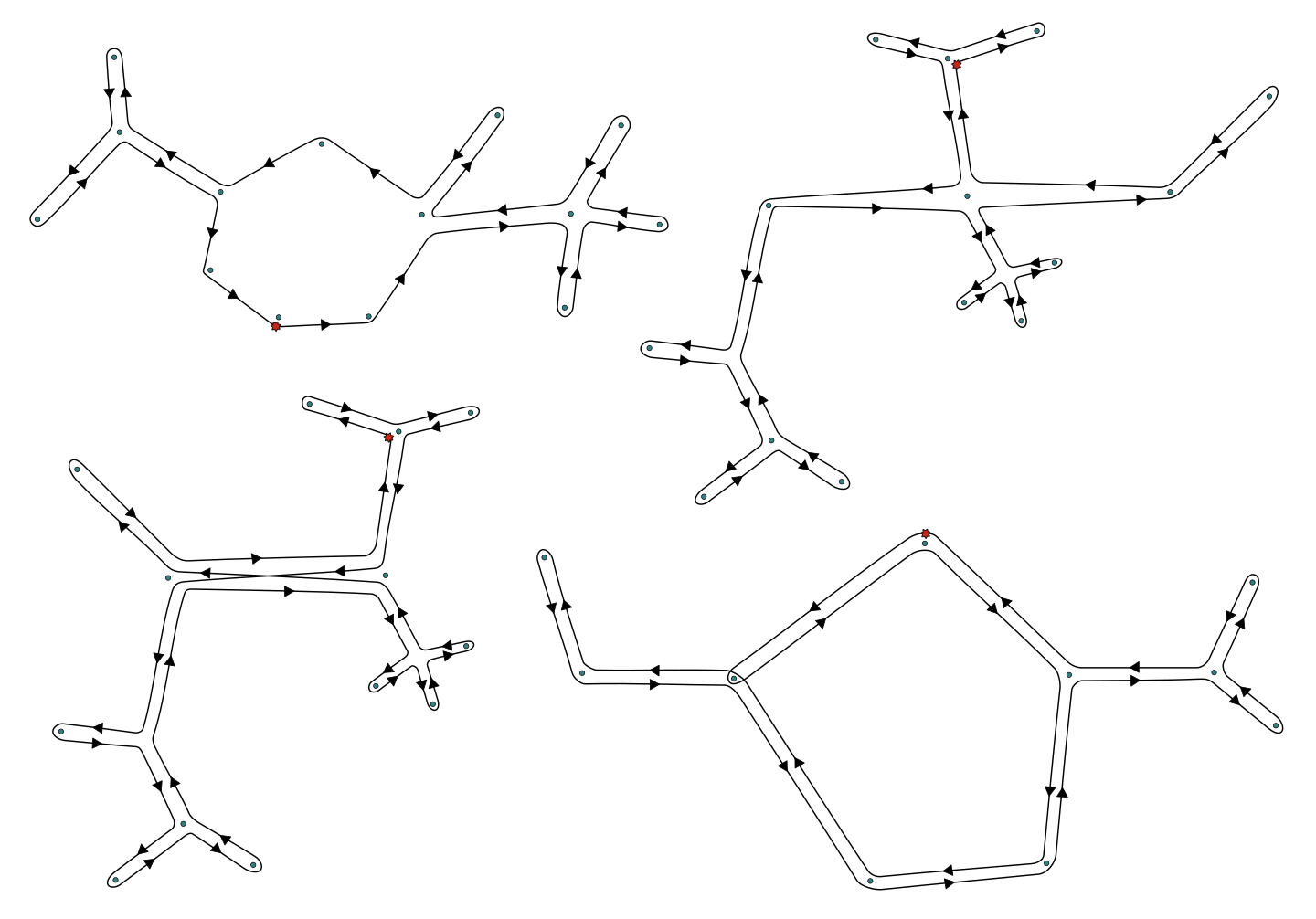} 
  \caption{Graphs for which $n^{-k/2}\mathcal A_\pi^{(n)}$ has a non--trivial limit. 
  Top left, top right, bottom left, bottom right correspond respectively to cases 1, 2, 3 and 4 above. Green dots denote the vertices and red stars denote initial points.} 
\end{figure}

\noindent
Before turning to the convergence of the expectation and 
the fluctuations of traces, we state a general lemma which will be used 
frequently. 

\begin{lemma}[Even multiplicities in trees]
	\label{lemma:even_multi_trees}
	Let $\psi: [k] \rightarrow [n]$ be such that $\overline{G}_\psi$ 
	is a tree. Then, each edge of $\overline{G}_\psi$ has an 
	even multiplicity in $G_\psi$ with an equal number of oriented edges 
	in each direction. 
\end{lemma}

\begin{proof}
	Let $e = (i, i+1) \in E_\psi$ be an edge. 
	Denote $u = \psi(i)$ 
	and $v = \psi(i+1)$ its endpoints. 
	Since $\psi$ traces a cycle, there exists a sequence of edges 
	$(i+1, i+2), \dots, (i+r, i+r+1)$ such that $s((i+1, i+2)) = v$ and 
	$t((i+r, i+r+1)) = u$ for some $1 \leq r \leq k-1$. Take such $r$ minimal. 
	Since $\overline{G}_\psi$ is a tree, 
	one must have $s((i+r, i+r+1)) = v$ otherwise the graph $\overline{G}_\psi$ 
	would have a cycle. 
	Removing the edges $(i, i+1), \dots, (i+r, i+r+1)$ from $G_\psi$ 
	shows the result by induction on 
	$\left| \{ e \in E_\psi : \{ s(e), t(e) \} = \{ u, v \}  \} \right|$.
\end{proof}

\subsubsection{Convergence of expected value}
\label{subsection:proof_lemma_expectation_convergence}

Recall that
\[
	U_{k}^{(n)} = \mathrm{Tr} \left[ P_k \left(\frac{A_{n}}{\sqrt{n}} \right) \right] + nt \delta_{k=2}.
\]
as defined previously in \eqref{eq:Uk}. 
The main result of this section is Proposition \ref{prop:expectation_convergence} 
which shows the convergence of $E\big[ U_k^{(n)} \big]$ for each $k \geq 1$. 
\begin{proposition}[Vanishing Chebyshev expectation] For every $k \geq 1$,
	\label{prop:expectation_convergence}
		\[ \lim_{n \to \infty} \mathbb E\big[ U_k^{(n)} \big] = 0 .\]
\end{proposition}

\noindent
The proof is based on the convergence of traces of monomials which is Lemma 
\ref{lemma:expectation_monomial}.
As for Wigner matrices, Catalan numbers
$C_p = \frac{1}{p+1} \binom{2p}{p}$ are involved.

\begin{lemma}[Monomial expectation]
	\label{lemma:expectation_monomial}
	Let $p \geq 1$. Then, 
	\begin{itemize}
		\item[(i)] $\E \left[ \mathrm{Tr} \left[ \left(\frac{A_{n}}{\sqrt{n}} \right)^{2p} \right] \right] 
		- n C_{p} t^p  \xrightarrow[n \to \infty]{} 0$
		 and \\
		\item[(ii)] $\E \left[ \mathrm{Tr} \left[ \left(\frac{A_{n}}{\sqrt{n}} \right)^{2p+1} \right] \right] \xrightarrow[n \to \infty]{} 0$.
	\end{itemize}
\end{lemma}

\noindent
We now prove Proposition \ref{prop:expectation_convergence}, using  
the asymptotics of Lemma \ref{lemma:expectation_monomial}.

\begin{proof}[Proof of Proposition \ref{prop:expectation_convergence}]
	
	We begin by noticing that, if $\sigma_t$ denotes
	the uniform probability measure on the ellipse $\mathcal E_t$
	defined in \eqref{eq:ellipse},
	\[\int_{\mathcal E_t}
	w^{2p} \mathrm d \sigma_t(w)
	=
	\frac{1}{p+1}
	{2p \choose p} t^p = C_p t^p
	\]
	which can be obtained by using elliptic coordinates. Then, from 
	Lemma \ref{lemma:expectation_monomial}, $(i)$, the asymptotic
	\[ \mathbb E \left[ \mathrm{Tr} \left[ \left(\frac{A_{n}}{\sqrt{n}} \right)^{2p} \right] \right] 
	= n C_{p} t^p  + o(1)
	= n\int_{\mathcal E_t}
	w^{2p} \mathrm d \sigma_t(w) + o(1) \]
	implies that
	\[ \mathbb E \left[ \mathrm{Tr} \left[ P_k^{(t)}\bigg(\frac{A_{n}}{\sqrt{n}} 
	\bigg) \right] \right] 
	= n\int_{\mathcal E_t}
	P_k^{(t)}(w) \mathrm d \sigma_t(w) + o(1). \]
	It remains to compute 
	$\int_{\mathcal E_t}
	P_k^{(t)}(w) \mathrm d \sigma_t(w)$. To achieve this,
	we define
	\[
	\mathcal M(z) = \int_{\mathcal E_\rho}
	\log \left(1+t z^2 - z w \right)
	\mathrm d \sigma_t(w)  \]
	which is holomorphic for
	$z \in \mathbb D$ with
	\[
	\frac{d^k}{dz^k} \mathcal M(z) = \int_{\mathcal E_t}
	\frac{d^k}{dz^k}  \log \left(1+t z^2 - z w \right)
	\mathrm d \sigma_t(w) . \]
	In particular, the  coefficient $z^k$ of $\mathcal M(z)$
	is the integral of the coefficient
	$z^k$ of $\log \left(1+t z^2 - z w \right)$, 
	\[
	[z^k] \mathcal M(z) = -\int_{\mathcal E_t}
	\frac{P_k^{(t)}(w)}{k}\mathrm d \sigma_t(w).
	\]
	To make a connection with
	classical Hermitian random matrix theory we can compute
	\[S_t(z)=\int_{\mathcal E_t} \frac{\mathrm d \sigma_t(w)}{z-w}
	=
	\frac{1}{z}
	\sum_{p=0}^\infty 
	\left(\frac{\sqrt t}{z}\right)^{2p}
	C_p = \frac{1}{\sqrt t} h \left(\frac{z}{\sqrt t}
	\right),\]
	where $h (z)=
	\frac{z - \sqrt {z^2 - 4}}{2}$ is the holomorphic solution to
	$z=h+\frac{1}{h}$
	that
	goes to zero at infinity,
	which we may recognize as the
	Cauchy-Stieltjes transform
	of the semi-circle distribution (case $t=1$). 
	In particular, 
	$S_t(z^{-1} + tz ) = z$ and
	we may connect the function $S_t$ with $\mathcal M$  by taking its derivative
	\begin{align*}\frac{\mathrm d}{\mathrm d z}
		\mathcal M(z)
		&=\frac{1}{z}
		+\left(-\frac{1}{z^2} + t\right)
		\int_{\mathcal E_t}
		\frac{1}{\frac{1}{z} + t z - w}
		\mathrm d \sigma_t (w)					
		=\frac{1}{z}
		+\left(-\frac{1}{z^2} + t\right)
		S_t\left(\frac{1}{z} + t z\right)		\\
		&= 
		\frac{1}{z}
		+\left(-\frac{1}{z^2} + t\right)
		z											
		=t z.
	\end{align*}
	So, since we also know that
	$\mathcal M(0) = 0$, we obtain
	$\mathcal M(z) = \frac{t z^2}{2}$, which implies that
	\[\int_{\mathcal E_t}
	P_k^{(t)}(w)\mathrm d \sigma_t(w) = 0
	\mbox{ for } k \neq 2 \quad \mbox{ while }
	\int_{\mathcal E_t}
	P_2^{(t)}(w)\mathrm d \sigma_t(w) = -t.\]

\end{proof}

\noindent
We now turn to the proof of Lemma \ref{lemma:expectation_monomial}.

\begin{proof}[Proof of Lemma \ref{lemma:expectation_monomial}]

		For $k \geq 1$, write
		\begin{equation*}
			\E
			\bigg[ \mathrm{Tr} \bigg[ \bigg(\frac{A_{n}}{\sqrt{n}} \bigg)^k \bigg] \bigg]=
			 n^{-k/2} \sum_{\pi \in \mathcal P(k)} 
			\mathbb E[\mathcal A_\pi^{(n)}]
			= n^{-k/2} \sum_{\pi \in \mathcal P(k)} 
			C_\pi^{(n)} \alpha_\pi,
		\end{equation*}
		with $\alpha_\pi$ being the common value of 
		$\mathbb E[a_\psi]$ for $\psi$ inducing the partition $\pi$
		and $C_\pi^{(n)}$ is the number of $\psi:[k] \to [n]$ that induce
		$\pi$.
		Since the choice of $\psi$ inducing $\pi$
		amounts to choosing the image of each block of $\pi$,
		we find that, for $n \geq k$,  $C_\pi^{(n)} = n^{\underline {|\pi|}}$, where 
		$|\pi|$ denotes the number of blocks of $\pi$
		and $n^{\underline k} = n(n-1)\dots(n-k+1)$ is the falling factorial.
		Let us investigate the cases where $\alpha_\pi \neq 0$.\\
		\\
		As entries are centered, for $\alpha_\pi$ not to vanish, every edge of
		$G_\pi$ has to be at least double. 
		Let us denote this set by
		$\mathcal D_k = \{\pi \in \mathcal P(k): G_\pi \mbox{ 
			has no simple edges}\}$.
		Since for $\pi \in \mathcal D_k$  we have
		$2|\overline E_\pi| \leq |E_\pi|$, we obtain
		$|V_\pi| \leq |\overline E_\pi| + 1 \leq \frac{k}{2}+1$ so that,
		for $n \geq k$,
		\begin{equation*}
			\E
			\bigg[ \mathrm{Tr} \bigg[ \bigg(\frac{A_{n}}{\sqrt{n}} \bigg)^k \bigg] \bigg]= n^{-k/2} \sum_{\ell =1 }^{\lfloor \frac{k}{2}+1 \rfloor}
			n^{\underline \ell}
			\sum_{\pi \in \mathcal D_k, |\pi| = \ell} 
			\alpha_\pi
		\end{equation*}
		For odd values of $k$, the only possible contribution to the limit comes
		from $\ell = (k+1)/2$.
		For this $\ell$, we
		would have $|V_\pi| =(k+1)/2$ and, since $|V_\pi| -1 \leq |\overline E_\pi| \leq k/2$, we must
		have $|\overline E_\pi| = (k-1)/2$ so that
		$\overline{G}_\pi$ is a tree. 
		The fact that $|\overline E_\pi| = (k-1)/2$ and that each edge is at least
		double in $E_\pi$ tells us that 
		there is exactly one edge that is triple in $E_\pi$.
		This would contradict the result of Lemma \ref{lemma:even_multi_trees}
		so that there
		is no $\pi \in \mathcal D_k$ satisfying
		$|\pi| = (k+1)/2$.
		Then, for odd $k$ the expected value goes to zero.\\
		\\
		For 
		even values $k=2p$,
		\begin{equation*}
			\E
			\bigg[ \mathrm{Tr} \bigg[ \bigg(\frac{A_{n}}{\sqrt{n}} \bigg)^{2p} \bigg] \bigg]=  
			n\hspace{-1mm}\sum_{\pi \in \mathcal D_{2p}, |\pi| = p+1} 
			\hspace{-3mm}\alpha_\pi 
			+\bigg[-\binom{p+1}{2}
			\sum_{\pi \in \mathcal D_{2p}, |\pi| = p+1} 
			\hspace{-3mm} \alpha_\pi 
			+\hspace{-1mm}	\sum_{\pi \in \mathcal D_{2p}, |\pi| = p} 
			\hspace{-3mm} \alpha_\pi \bigg]
			+ O\Big(\frac{1}{n}\Big).
		\end{equation*}
		For $\pi \in \mathcal D_{2p}$ satisfying $|\pi| = p+1$,
		the graph
		$\overline G_\pi$ has $p$ edges 
		and $p+1$ vertices so that it is a tree and $\pi$ draws a path on $\overline G_\pi$
		with exactly two edges passing through each edge in $\overline E_\pi$,
		necessarily once in each direction according to Lemma \ref{lemma:even_multi_trees}.
		Denote by $\mathcal T_p$ the set of these path patterns, also
		known as rooted plane trees.
		Then, $\alpha_\pi = t^p$ for every $\pi \in \mathcal T_p$ so that
		\[\sum_{\pi \in \mathcal D_{2p}, |\pi| = p+1} \alpha_\pi = |\mathcal T_p| t^p .\]
		For $\pi \in \mathcal D_{2p}$ satisfying $|\pi| = p$, we
		can either have $|\overline E_\pi| = p$ or $p-1$.
		\begin{itemize} 
			\item If $|\overline E_\pi| = p$, then
			$\overline G_{\pi}$ is a graph with $p$ edges and $p$ vertices
			so that it is a unicyclic graph.
			For $\alpha_\pi$ to be non-zero, 
			the path drawn by $\pi$ on $\overline G_\pi$
			has to have
			exactly two edges passing in opposite directions through each edge in $\overline E_\pi$ (its cycle can be a loop).
			The value of $\alpha_\pi$ in this case is 
			$\mathbb E[a_{12}a_{21}]^p = t^p$ in the
			case of no loops and
			$\mathbb E[a_{12}a_{21}]^{p-1}\mathbb E[a_{11}^2] = t^{p-1}t$
			in the case where there is a loop which turns out to be the same number.
			We denote by $\mathcal N_p$ the set of all these path patterns.
			\item If $|\overline E_\pi| = p-1$, 
			then
			$\overline G_{\pi}$ is a graph with $p-1$ edges and $p$ vertices
			so that it is a tree.
			Since a closed path drawn on a tree should pass
			an even number of times by each edge with 
			the same number of times in each direction 
			by Lemma \ref{lemma:even_multi_trees},
			every edge of $\overline G_{\pi}$
			should be double in $G_\pi$ except only for one edge of multiplicity four.
			The value of $\alpha_\pi$ in this case is
			$\mathbb E[(a_{12}a_{21})^2]\mathbb E[a_{12}a_{21}]^{p-2} = 
			(2t^2) t^{p-2} = 2t^p$ and
			we denote by $\mathcal N_p'$ the set of these path patterns.
		\end{itemize}
		
		\noindent
		We will now show that the $O(1)$ term in the asymptotic development 
		of traces vanishes by showing the combinatorial equality
		\begin{equation}
			\label{eq:merging_trees}
			\binom{p+1}{2} |\mathcal T_p| = |\mathcal N_p| + 2 |\mathcal N'_p|.
		\end{equation}
		The idea behind \eqref{eq:merging_trees} is that,
		given an element of $\mathcal T_p$, one can choose two vertices
		and identify them to obtain an element either of
		$\mathcal N_p$ if the vertices are not at
		distance two or of $\mathcal N_p'$ if
		the chosen vertices are at distance two.
		More precisely, consider
		\[\Phi:\bigsqcup_{\pi \in \mathcal T_p}
		\{\{a,b\} \subset \pi: a \neq b\} \to \mathcal N_p 
		\cup \mathcal N_p'\]
		defined by taking $\pi \in \mathcal T_p$ and
		a pair of blocs $a_1,a_2$ of $\pi = \{a_1,a_2,a_3,\dots,a_{p+1}\}$ to the partition
		$\{ a_1\cup a_2, a_3, \dots, a_{p+1}\}$.
		We may notice that an element of $\mathcal N_p$
		has a unique preimage by $\Phi$.
		Here is a possible way to see this.
		Let $\pi \in \mathcal N_p$ and let $\psi: [k] \rightarrow [n]$ 
		be a function which induces $\pi$. Consider the smallest index 
		$\ell \in [k]$ such that
		\begin{itemize}
			\item edges $(\ell, \ell+1)$ and $(\ell+r, \ell+r+1)$ for some $r \geqslant 1$ 
			are in opposite directions and belong to the cycle of $\overline{G}_\pi$, 
			and
			\item edges $(\ell+1, \ell+2), \dots, (\ell + r-1, \ell + r)$ 
			belong to a tree component in $\overline{G}_\pi$.
		\end{itemize}
		Then, as $\overline{G}_\pi$ 
		has a cycle, there exists $s \in [k]$ such that 
		$\psi(s) \neq \psi(\ell)$ and $\psi(s+1) = \psi(\ell+1)$. 
		In $\pi$, there is the block 
		\begin{equation*}
			\{ t_1, \dots, t_p, \ell+1, \ell+r, s+1, t_{p+1}, \dots, t_{q} \}
		\end{equation*}
		for some $t_1, \dots, t_q \in [k]$ and $q \geqslant 0$. 
		The preimage of $\pi$ is given by splitting the above block into two 
		distinct blocks: 
		\begin{equation*}
			\{ t_1, \dots, t_p, \ell+1, \ell+r \} \sqcup \{s+1, t_{p+1}, \dots, t_{q} \}.
		\end{equation*}

\begin{figure}[ht] 
	\centering 
	\includegraphics[scale=0.6]{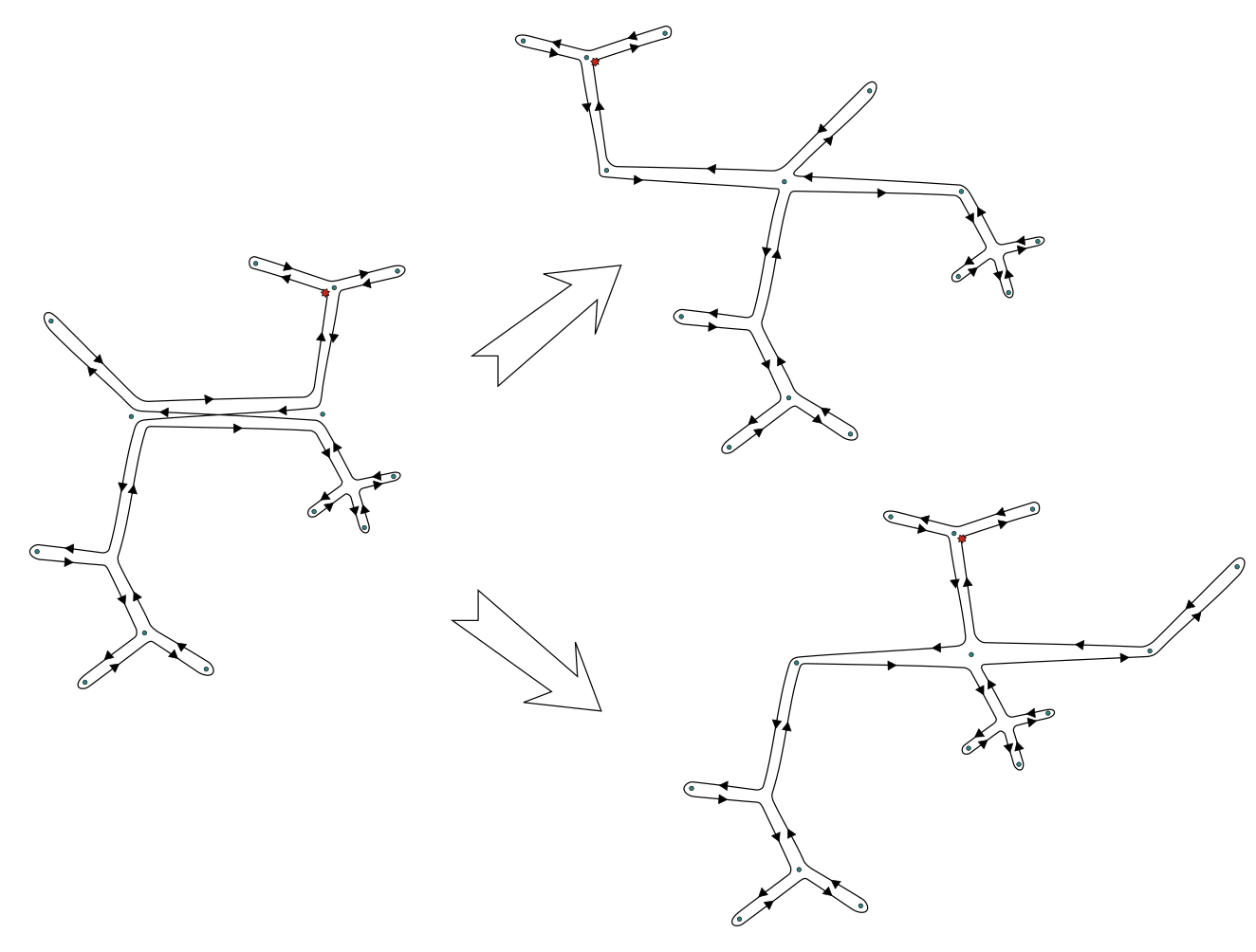} 
	\caption{A tree with a quadruple edge with the two possible ways
		of ungluing it to obtain a planar tree.} 
	\label{fig:ungluing}
\end{figure}

On the other hand, an element of $\mathcal N'_p$ has 
exactly two preimages by $\Phi$ each one being obtained
by splitting blocks 
one of
the two endpoints of the ``quadruple'' edge (see Figure \ref{fig:ungluing}).
Then,
\[\binom{p+1}{2} |\mathcal T_p| = |\mathcal N_p| + 2 |\mathcal N'_p|\]
so that 
\begin{equation*}
	\E
	\bigg[ \mathrm{Tr} \bigg[ \bigg(\frac{A_{n}}{\sqrt{n}} \bigg)^{2p} \bigg] \bigg]=  n|\mathcal T_p| t^p
	+\bigg[\binom{p+1}{2}|\mathcal T_p| t^p - |\mathcal N_p| t^p - 
	|\mathcal N_p'| 2 t^p\bigg]
	+ O\Big(\frac{1}{n}\Big)
	=nC_p t^p + O\Big(\frac{1}{n}\Big).
\end{equation*}

\end{proof}

\subsubsection{Convergence of fluctuations}
\label{sub:Fluctuations}

Recall the definition $\mathcal A_\pi^{(n)} = 
\hspace{-1mm}\sum \limits_{\substack{\psi:[k] \to [n] \vspace{0.5mm}  
\\ \pi_\psi = \pi}} a_\psi$ from \eqref{eq:Api}
and let us use the following notation. 

\begin{definition}[Rooted plane tree and 
	uniciclic graph] 
	\label{def:rooted_plane_tree_and_unicyclic}
	Let $k \geq 1$ and $\pi \in \mathcal P(k)$.
	\begin{itemize}
	 \item	We say that $\pi \in \mathcal P(k)$ is a \textit{rooted plane tree} if
	$\overline G_\pi$ is a tree and each edge in $G_\pi$ is double,
	once in each direction.
	\item We say that $\pi \in \mathcal P(k)$ is a \textit{rooted plane unicyclic graph} if
	$\overline G_\pi$ has a unique cycle, with each edge in 
	$G_\pi$ being simple if it belongs to the unique cycle and double, 
	once in each direction, if it does not belong to the unique cycle.
	\end{itemize}
	\end{definition}

\begin{proposition}[Gaussian limit process] 
\label{prop:GaussianLimit}
	The family $(n^{-k/2}\mathcal A_\pi^{(n)} - n^{-k/2} 
	\mathbb E[\mathcal A_\pi^{(n)}])_{\pi \in \mathcal P(k),k \geq 1}$
	converges to a Gaussian process as $n$ goes to infinity.
	Moreover, the only $\pi$ that give non-trivial limits
	are the rooted plane trees and the rooted plane unicyclic graphs, i.e.,
	if $\pi$ is neither of those, the
	sequence $n^{-k/2}\mathcal A_\pi^{(n)} - n^{-k/2}\mathbb E[\mathcal A_\pi^{(n)}]$
	converges to zero in law.
\end{proposition}

\begin{proof}
	Let $k_1,\dots,k_\ell \geq 1$
	and denote $k = k_1+\dots+k_\ell$. For each
	$i \in \{1,\dots,\ell\}$ choose
	$\pi_i \in \mathcal P(k_i)$ and an ``exponent'' $s_i \in \{\cdot, *\}$,
	and use the convention  $x^{(\cdot)} = x$ and $x^{(*)} = \overline x$.
	Our goal is to show the limit
	\[\lim_{n \to \infty} 
	n^{-k/2}\mathbb E\bigg[ \prod_{i=1}^\ell (\mathcal A_{\pi_i} - \E[\mathcal A_{\pi_i}])^{(s_i)}  \bigg] 
	= \mathbb E \bigg[\prod_{i=1}^\ell Y_{i}^{(s_i)} \bigg].\]
	where $(Y_i)_{1\leq i \leq \ell}$ is a Gaussian vector on
	$\mathbb C^\ell$. Let us write
	\[	n^{-k/2}\mathbb E\bigg[ \prod_{i=1}^\ell (\mathcal A_{\pi_i} - \E[\mathcal A_{\pi_i}])^{(s_i)}  \bigg] 
	=
	n^{-k/2}
	\sum_{\substack{\psi:[\mathbf k] \to [n] \vspace{0.5mm}  \\ \pi_{\psi_j} = \pi_j}}
	\mathbb E\bigg[ \prod_{j=1}^\ell (a_{\psi_j} - \mathbb E[a_{\psi_j}])^{(s_j)}  \bigg],
	\]
	where we recall that $[\mathbf k] = \sqcup_{j=1}^\ell [k_j]$
	and $\psi = \sqcup_{j=1}^\ell \psi_j$ with
	$\psi_j:[k_j] \to [n]$.
	Since the expected value
		$\mathbb E[ \prod_{j=1}^\ell (a_{\psi_j} - \mathbb E[a_{\psi_j}])^{(s_j)} ]$
		depends only on
		the partition induced by $\psi$, we can write
		\[ 
		\sum_{\substack{\psi:[\mathbf k] \to [n] \vspace{0.5mm}  \\ \pi_{\psi_j} = \pi_j}}
		\mathbb E\bigg[ \prod_{j=1}^\ell (a_{\psi_j} - \mathbb E[a_{\psi_j}])^{(s_j)}  \bigg]
		=
		\sum_{\substack{\tau \in \mathcal P(\mathbf k) \vspace{0.5mm}  \\ 
				\tau_j = \pi_j}}  C_\tau^{(n)} \beta_\tau
		\]
	with the following notation. The partition
	$\tau_j \in  \mathcal P(k_j)$ is
	the one induced by $\tau$ via the canonical inclusion
	$I_i:[k_j] \to [\mathbf k]$, i.e., the part of the path pattern
	traced by the $j$-th path, $\beta_\tau = 
	\E[ \prod_{j=1}^\ell (a_{\psi_j} - \mathbb E[a_{\psi_j}])^{(s_j)}  ]$
	for any $\psi$ inducing $\tau$ and
	$C_\tau^{(n)}$ is the cardinal of the set
	$\{\psi:[\mathbf k] \to [n]: \psi \mbox{ induces } \tau \}$
	which equals $n^{\underline{|\tau|}} = n(n-1)\dots(n-|\tau| + 1)$
	if $n \geq k$. \\
	\\
	The question now reduces to understand
	that $n^{-k/2}C_\tau^{(n)}$ has a limit if $\beta_\tau \neq 0$
	and to understand for which of those $\tau$ the limit is not zero. 
	This is the purpose of Lemmas \ref{lem:ShareEdge} and 
	\ref{lem:BoundTau} that we state and prove now. The end of the proof of 
	Proposition \ref{prop:GaussianLimit} will follow afterwards.

\begin{lemma} \label{lem:ShareEdge}
	If $\beta_\tau \neq 0$ then
	each edge 
	of $\overline G_\tau$ is at least double in $G_\tau$
	and for each $i \in \{1,\dots,\ell\}$ there is $j \in \{1,\dots,\ell\}$
	different from $i$ such that
	$\overline G_{\tau_i}$ and $\overline G_{\tau_j}$ share an edge.
\end{lemma}
\begin{proof}
	Let $\psi:[\mathbf k] \to [n]$ which induces $\tau$.
	If $G_\psi$ has a simple edge
	then there is a unique $i \in \{1,\dots,\ell\}$ 
	such that the edge $(r, r+1) \in E_i$ of $G_{\psi_i}$ 
	for some $r \in [k_i]$ is simple.
	In that case $\mathbb E[a_{\psi_i}] = 0$ since $G_{\psi_i}$
	has a simple edge and 
	since $a_{\psi_i(r), \psi_i(r+1)}$ is centered,
	\[\beta_\tau =
	\E\bigg[\prod_{j=1}^\ell (a_{\psi_j} - \mathbb E[a_{\psi_j}])^{(s_j)}\bigg]
	= \E \left[a_{\psi_i(r), \psi_i(r+1)}^{(s_i)} \right]
	\E\bigg[\prod_{j\neq i} (a_{\psi_j} - \mathbb E[a_{\psi_j}])^{(s_j)}\bigg]
	=0.\]
	Let $i \in \{1,\dots,\ell\}$.
	If there does not exist such index $j$, 
	the random variable $(a_{\psi_i} - \mathbb E[a_{\psi_i}])^{(s_i)}$
	would be independent of the product
	$\prod_{j\neq i} (a_{\psi_j} - \mathbb E[a_{\psi_j}])^{(s_j)}$ so that,
	since $(a_{\psi_i} - \mathbb E[a_{\psi_i}])^{(s_i)}$ is centered,
	$\beta_\tau$ would be zero.
\end{proof}

\vspace{1.5mm}

\noindent
	Recall that $k=k_1+\dots+k_\ell$.

\begin{lemma} 
	\label{lem:BoundTau}
	Suppose that $\beta_\tau \neq 0$.
	Then, $|\tau| \leq k/2$. Moreover,
	if some connected component of $\overline G_\tau$
	involves three or more $\overline G_{\tau_i}$, then the strict inequality
	$|\tau| < k/2$ holds.
\end{lemma}
	\begin{proof}
		
		We will actually show the this lemma under the conclusions of Lemma 
		\ref{lem:ShareEdge}.\\
		\\
		Recall that $|\tau|$ counts the number of vertices in $G_\tau$ so that we
		want to show that $|V_\tau| \leq k/2$.
		A connected component of 
		$\overline{G}_\tau$ is formed by some $G_{\tau_{i_1}},\dots,G_{\tau_{i_s}}$
		so that
		it is enough to prove this inequality for $\tau$ restricted to
		$[k_{i_1}]\sqcup \dots \sqcup [k_{i_s}]$.
		In other words, we may assume without loss of generality 
		that $\overline G_\tau$ is connected.\\
		\\
		Since each edge is at least double we have that
		$|\overline E_\tau| \leq k/2$.
		The inequality 
		$|V_\tau| \leq |\overline E_\tau| + 1$
		tells us that $|V_\tau| \leq k/2 + 1$
		and we need to understand why $|V_\tau|$ cannot be in $(k/2,k/2+1]$.
		\begin{itemize}
			\item[--] If $k$ is even and $|V_\tau| = k/2+1$ then 
		$|\overline E_\tau|=k/2$ so that $\overline G_\tau$ is a tree
		and each edge is double.
		Since
		$\tau_i$ traces a closed path in this tree, it must
		traverse each edge twice, once in each direction 
		by Lemma \ref{lemma:even_multi_trees}.
		This implies that $\overline G_{\tau_i}$ does not share edges
		with any other $\overline G_{\tau_j}$ because each edge of
		$G_\tau$ is double which contradicts
		$\beta_\tau \neq 0$ by Lemma \ref{lem:ShareEdge}. \vspace{1mm}
			\item[--] If $k$ is odd and $|V_\tau| = k/2 + 1/2$ then
			$|\overline E_\tau| = k/2-1/2$ so that again $\overline G_\tau$
			is a tree.
			But, using Lemma \ref{lemma:even_multi_trees}, 
			a closed path in a tree contains an even number of edges
			so that each $k_i$ has to be even which contradicts that
			$k = k_1+\dots + k_\ell$ is odd.
		\end{itemize}
		If $|\tau| = k/2$ then
		$|\overline E|$ can be either $k/2$ or $k/2-1$.
		The first case happens when $\overline G_\tau$ is unicyclic
		and the second case happens when it is a tree.
		We need to see why in this case $\ell$ must be $2$,
		i.e., why there can be only two $\overline G_{\tau_i}$ forming $\overline G_\tau$.
		
		\begin{itemize}
			\item[--] Suppose that $|\tau|=|\overline E| = k/2$
			so that $\overline G_\tau$ is unicyclic.
			Then, 
			each $\tau_i$ has to traverse each edge of the unique cycle at least once
			because, if not, 
			$\tau_i$ would draw a path on a tree ($\overline G_\tau$ with that edge removed) 
			so that each edge would be double
			and, since each edge of $G_\tau$ is precisely double, 
			$G_{\tau_i}$ would not share edges
			with the other $G_{\tau_j}$. 
			But there cannot be
			three $G_{\tau_i}$ passing through the unique cycle because
			each edge is double so that
			there has to be exactly two $G_{\tau_i}$.
			In particular, $G_{\tau_i}$ is also unicyclic and 
			$G_\tau$ is made of $G_{\tau_i}$ and $G_{\tau_j}$ by
			gluing them along the unique cycle because, if an
			edge is traversed at least once by $\tau_i$ then it is traversed
			exactly twice, once in each direction
			(if not, we would be able to form a cycle passing through
			this edge).\vspace{1mm}
			\item[--] Suppose that $|\tau|=|\overline E| +1= k/2$
			so that $\overline G_\tau$ is a tree.
			In this case, this implies in particular, that $\overline G_{\tau_i}$ is a tree
			with each of its edges traversed twice by $\tau_i$, once in 
			each direction. Take $\tau_i$ and $\tau_j$ for $i \neq j$
			such that $G_{\tau_i}$ and $G_{\tau_j}$ share an edge.
			This implies that
			this shared edge is at least ``quadruple''. Since
			$|\overline E| = k/2 - 1$, $|E| = k/2$ and each edge is at least double
			we must have that every edge of $G_{\tau}$ is
			precisely double except for the
			``quadruple edge. This implies that any other 
			$\overline G_{\tau_p}$
			cannot share an edge with any other $G_{\tau_q}$
			which cannot happen by Lemma \ref{lem:ShareEdge}.
			So, $G_\tau$ is made of two trees $G_{\tau_i}$
			and $G_{\tau_j}$ by gluing them along an edge
			(which may be thought of as a degenerate cycle).
			
		\end{itemize}
		
	\end{proof}
	
	\noindent
	We turn back to the proof of Proposition \ref{prop:GaussianLimit}. 
	Lemmas \ref{lem:ShareEdge} and \ref{lem:BoundTau}
	already show the Gaussian behavior.
	Indeed, let us denote by $\mathcal P_2(\pi_1,\dots,\pi_\ell)$
	the set of partitions $\tau \in \mathcal P(\mathbf k)$
	such that $\tau_i = \pi_i$ for every $i \in [\ell]$ and 
	such that $\beta_\tau \neq 0$, 
	i.e., that satisfy the conditions of Lemmas \ref{lem:ShareEdge} and \ref{lem:BoundTau}.
	The proof of Lemma \ref{lem:BoundTau} showed that connected components of $G_\tau$ 
	come from two $\tau_i$'s which are paired according to a common cycle or 
	via an edge which will be quadruple if both are rooted plane trees.
	Then, by Lemma \ref{lem:BoundTau}, there exists a pair partition 
	that we denote $\Pi_\tau$ of $[\ell]$ 
	where a block is $\{i,j\}$ if some connected
	component of $G_\tau$ is formed by
	$G_{\tau_i}$ and $G_{\tau_j}$.
	In particular, $\ell$ has to be even for 
	 $\mathcal P_2(\pi_1,\dots,\pi_\ell)$ not to be empty. In all cases,
	
	\[ \lim_{n \to \infty}	
	n^{-k/2}\mathbb E\bigg[ \prod_{i=1}^\ell (\mathcal A_{\pi_i} - \E[\mathcal A_{\pi_i}])^{(s_i)}  \bigg] 
	= \hspace{-1mm} \sum_{\tau \in \mathcal P_2(\pi_1,\dots,\pi_\ell)} 
	\hspace{-1mm} \beta_\tau \]
	which would be zero if $\ell$ is odd.
	Denote by $\mathcal P_2(\ell)$ the set
	of pair partitions of $[\ell]$. For each partition $\tau  \in
	\mathcal P_2(\pi_1,\dots,\pi_\ell)$ consider the family
	of partitions $(\tau_a)_{a \in \Pi_\tau} $, where if 
	$a= \{i,j\}$ is a block of $\Pi_\tau$, the partition
	$\tau_a \in \mathcal P_2(\pi_i,\pi_j)$ is the one induced on 
	$[k_i] \sqcup [k_j]$ by $\tau$. We identify 
	$\mathcal P_2(\pi_i,\pi_j) \simeq \mathcal P_2(\pi_j,\pi_i)$
	so that the order is not important. The
	assignment $\tau \mapsto (\tau_a)_{a \in \Pi_\tau} $ defines a bijection
	\[\mathcal P_2(\pi_1,\dots,\pi_\ell) \to \bigsqcup_{\Pi  \in \mathcal P_2(\ell)} 
	\prod_{\{i,j\} \in \Pi}\mathcal P_2(\pi_i,\pi_j).\]
	Notice that, since the components of $G_\tau$ are independent,
	$\beta_\tau = \prod_{a \in \Pi_\tau} \beta_{\tau_a}$ where we defined
	$\beta_{\tau_a} = 
	\mathbb E[(\mathcal A_{\tau_i} - \mathbb E[\mathcal A_{\tau_i}])^{(s_i)}
	(\mathcal A_{\tau_j} - \mathbb E[\mathcal A_{\tau_j}])^{(s_j)}]$
	whenever $a = \{i,j\}$.
	Then, we may write
	\begin{align*}
		\sum_{\tau \in \mathcal P_2(\pi_1,\dots,\pi_\ell)} 
	\hspace{-1mm} \beta_\tau &= \sum_{\Pi \in \mathcal P_2(\ell)}
	\sum_{\tau: \Pi_\tau = \Pi} \beta_\tau \\
	&= \sum_{\Pi \in \mathcal P_2(\ell)}
	\sum_{\tau: \Pi_\tau = \Pi} \, \prod_{a \in \Pi} \beta_{\tau_a} \\
	&= \sum_{\Pi \in \mathcal P_2(\ell)} 
	\prod_{\{i, j\} \in \Pi} \left( \sum_{\tau \in \mathcal P_2(\pi_i,\pi_j)} \beta_{\tau} \right).
	\end{align*}
	where we have used the bijection described above together
	with the distributive property.
	By Isserlis--Wick's theorem,
	we know that
	\begin{equation}
		\label{eq:limit_exp_product}
		\E \bigg[ \prod_{i=1}^\ell Y_{i}^{(s_i)} \bigg] =
		\sum_{\Pi \in \mathcal P_2(\ell)}
	   \prod_{\{i,j\} \in \Pi} \sum_{\tau \in \mathcal P_2(\pi_i,\pi_j)} \beta_{\tau}
	\end{equation}
	if $(Y_1,\dots,Y_\ell)$ is a centered Gaussian vector with covariances
	\[\mathbb E[Y_i^{(s_i)} Y_j^{(s_j)}]
	= \sum_{\tau \in \mathcal P(\pi_i,\pi_j)} \beta_{\tau}.\]
	Since the right-hand side of \eqref{eq:limit_exp_product}
	is the limit of covariances,
	we have shown that the family $(n^{-k/2}\mathcal A_\pi^{(n)} - n^{-k/2} 
	\mathbb E[\mathcal A_\pi^{(n)}])_{\pi \in \mathcal P(k),k \geq 1}$
	converges to a Gaussian family as $n \to \infty$.
	In fact, in the proof of Lemma 
	\ref{lem:BoundTau} we already found what graphs will contribute and how.

	\begin{lemma}[Contributing graphs]
		\label{lem:Contributing_graphs}
		 If $\mathcal P_2(\pi_i,\pi_j)$ is non-empty then
		both $\pi_i$ and $\pi_j$ are either
		plane rooted trees or they are plane rooted unicyclic graphs
		whose cycles have the same length.
	\end{lemma}
	\begin{proof}
		The proof
		is contained in the proof of Lemma \ref{lem:BoundTau}.
	\end{proof}
	
	\noindent
	This concludes the proof of Proposition
	\ref{prop:GaussianLimit}. Figure 
	\ref{fig:paired_components} 
	gives an example of contributions described by 
	Lemma \ref{lem:Contributing_graphs}. 

	\begin{figure}
		\centering
		\includegraphics[scale=0.4]{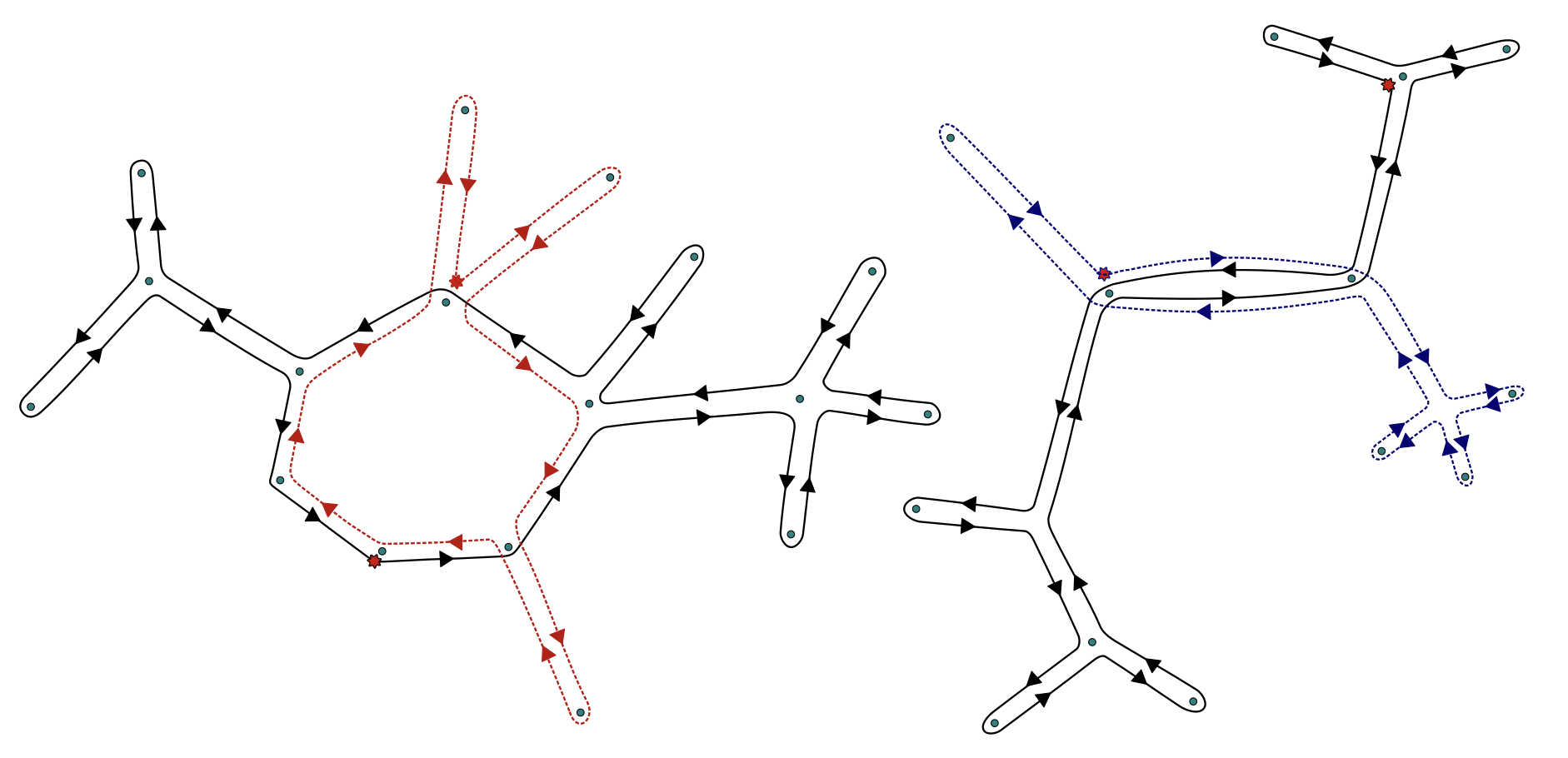}
		\caption{Two elements of $\mathcal{P}_2(\pi_1, \pi_2)$ where 
		$\pi_1, \pi_2$ are both rooted unicyclic graphs (left) or 
		both plane rooted trees (right).}
		\label{fig:paired_components}
	\end{figure}
	
\end{proof}

\begin{remark}
	A possible way to prove Proposition 
	\ref{proposition:convergence_to_gaussian} would be to compute explicitly 
	the limiting Gaussian family corresponding to traces of powers. These 
	Gaussians variables would not be independent. One would have to show that their  
	covariance is diagonalised by the Chebyshev polynomials thereby implying that the 
	variables $\left( U_k^{(n)} \right)_{k \geq 1}$ are independent.
	However, in Section \ref{subsec:variance_identification}, 
	we chose a different approach which provides 
	more intuition as to why the Chebyshev polynomials are the right 
	family for diagonalizing the covariance. 
\end{remark}

\subsubsection{Variance identification}
\label{subsec:variance_identification}

\begin{proof}[Proof of Proposition	\ref{proposition:convergence_to_gaussian}]

Let us denote by $\mathcal C_k^{(n)}$ the set
of $\psi: [k] \to [n]$ such that the associated graph $\pi_\psi$ 
is a rooted plane unicyclic graph
or a rooted plane tree. For such $\psi$, let 
denote $c(\psi)$ the 
set of edges in $E_\psi$ that are single, i.e, which form the cycle of 
$\overline{G}_\psi$. 
Note that this set is empty if $\pi_\psi$ is a rooted plane tree. 
Likewise, denote by $\ell(\psi)$ the set of edges in $E_\psi$ 
incident to a vertex of degree one in $\overline{G}_\psi$, 
that is, edges which are leaves in trees anchored on the cycle of $\overline{G}_\psi$.
Recall that

\[\Big( n^{-k/2}\mathrm{Tr}(A_n^k) 
- n^{-k/2}\mathbb E[\mathrm{Tr}(A_n^k)] \Big)
-n^{-k/2} \sum_{\psi \in \mathcal C_k^{(n)}}
\big( a_\psi - \mathbb E[a_\psi] \big) \xrightarrow[n \to \infty]{\mathrm{law}}0.	\]

\noindent
The idea is to ``regularize'' these terms and to consider instead, for any $\psi \in \mathcal C_k^{(n)}$,

\[\hat a_\psi := \prod_{e \in c(\psi)} a_e \prod_{e \in \ell(\psi)} (a_e a_{e^*} - t)\]

\noindent
where for an edge $e = (i, i+1)$, $a_e = a_{\psi(i), \psi(i+1)}$ and 
$e^* = (i+1, i)$.
A nice property of this term is that it has zero expected value
even if $\pi_\psi$ is a rooted plane tree.
Consider the sum 
\[:\hspace{-1mm} \mathrm{Tr}(A_n^k)\hspace{-1mm}: \hspace{2mm} 
\coloneqq \sum_{\psi \in \mathcal C_k^{(n)}} \hat a_\psi.\]

\noindent
By the same arguments as in Section \ref{sub:Fluctuations},
we know that
$(:\hspace{-1mm} \mathrm{Tr}(A_n^k)\hspace{-1mm}:)_{k \geq 1}$
converges to a Gaussian process as $n \to \infty$.
For a rooted plane unicyclic graph or a rooted plane tree $\pi$, we define

\[\hat{\mathcal A}_\pi^{(n)} \coloneqq \sum_{\substack{\psi:[k] \to [n] \vspace{0.5mm}  \\ \pi_\psi = \pi}} \hat a_\psi.\]
In the same way, $(\hat{\mathcal A}_\pi^{(n)})_\pi$
converges to a Gaussian process indexed by 
rooted plane unicyclic graphs and rooted plane trees.
Let us check that for any 
pair of rooted plane unicyclic graphs or trees $\pi_1$ and $\pi_2$,
\[\lim_{n \to \infty}
\mathbb E\big[\hat{\mathcal A}_{\pi_1}^{(n)} \hat{\mathcal A}_{\pi_2}^{(n)}\big]
= 0 \mbox{ and }
\lim_{n \to \infty}
\mathbb E\big[\hat{\mathcal A}_{\pi_1}^{(n)} \overline{\hat{\mathcal A}_{\pi_2}^{(n)} }\big]
= 0 \]
whenever one of them is not a cycle, 
that is, such that either $c(\psi_i) \neq E_{\psi_1}$ or $c(\psi_2) \neq E_{\psi_2}$ 
for any $\psi_1, \psi_2$ inducing $\pi_1$ and $\pi_2$ respectively. 
Since the leaves are centered in $\hat{\mathcal A}_{\pi_1}^{(n)}$ 
and $\hat{\mathcal A}_{\pi_2}^{(n)}$, 
the only possible non-vanishing contributions to the variance 
arise when $\pi_1$ and 
$\pi_2$ that either both have no leaves or share the same leaves. 
The case where both have no leaves corresponds 
exactly to the desired condition mentioned above.
Recall that the connected component must consist of double edges 
so that $\overline{G}_\pi$ has either a single cycle, 
or a quadruple edge in which case $\overline{G}_\pi$ is a tree. If $\pi_1$ and $\pi_2$ 
have a common leaf, this would result in a quadruple edge in $G_\pi$. 
Therefore, both $G_{\pi_1}$ and $G_{\pi_2}$ are plane rooted trees 
and the condition of having the same leaves implies that both 
$\overline{G}_{\pi_1}$ and $\overline{G}_{\pi_2}$ consist of only one edge. 
The latter case corresponds to two cycles of length two.
Therefore, the variances that will not have a zero limit
are those between two cycles of the same length so that, if $c_k$
denotes the partition $\{\{1\},\dots,\{k\}\}$, we have in law,

\[\lim_{n \to \infty} \Big(n^{-k/2}:\hspace{-1mm} 
\mathrm{Tr}(A_n^k)\hspace{-1mm}: \hspace{1mm}- \hspace{1mm} n^{-k/2}\hat{\mathcal A}_{c_k}\Big)=0.\]
If $Y_k$ denotes the limit in law of 
$n^{-k/2}\hat{\mathcal A}_{c_k}$, we would have, for $k \geq 3$,
\[\mathbb E[(Y_k)^2]= k \mathbb E[a_{12} a_{21}]^k  = k t^k  \quad \mbox { and } \quad 
\mathbb E[|Y_k|^2]=k \mathbb E[|a_{12}|^2]^k  = k.\]
For $k=2$, we have
$\mathbb E[(Y_2)^2] = 2 \mathbb E[(a_{12}a_{21}-t)^2] =2t^2$
and
$\mathbb E[(Y_2)^2] = 2 \mathbb E[|a_{12}a_{21}-t|^2] = 2$
and, for $k=1$,
$\mathbb E[(Y_1)^2] = \mathbb E[a_{11}^2] = t^2$ and $\mathbb E[|Y_1|^2]= 
\mathbb E[|a_{11}|^2] = 1$.
\\
\\
It now remains to express the normalised traces 
$:\hspace{-1mm} \mathrm{Tr}(A_n^k)\hspace{-1mm}: $
in terms of the traces of Chebyshev polynomials, that is, 
in terms of the variables $U_k^{(n)}$.
For simplicity, we forget the initial instant
of $\psi$, i.e., 
we consider the equivalence relation generated by
$\psi \sim \psi \circ \sigma$ with
$\sigma:[k] \to [k]$ given by $\sigma(i) = i+1$
and denote by $\mathcal D_k^{(n)}$ the 
quotient of $\mathcal C_k^{(n)}$ by this equivalence relation.
Notice that $a_\eta$ and $\hat a_\eta$ are well-defined for
$\eta \in \mathcal D_k^{(n)}$ and that, since each equivalence class
contains $k$ elements,
\[ :\hspace{-1mm} \mathrm{Tr}(A_n^k)\hspace{-1mm}: \ = 
\sum_{\psi \in \mathcal C_k^{(n)}} \hat a_\psi
=k \sum_{\eta \in \mathcal D_k^{(n)}} \hat a_\eta.\]
We may write
the sum of $\hat a_\eta$ over $\eta \in \mathcal D_k^{(n)}$
as a weighted sum of $a_\theta - \mathbb E[a_\theta]$ 
with $\theta \in \mathcal D_{k-2j}^{(n)} $
for $j \geq 0$ since each time we erase a leave we are erasing
two edges. 
Fix $j \geq 0$ and let us find the weight of
$\theta \in \mathcal D_{k-2j}^{(n)}$.
To construct an element from $\mathcal D_k^{(n)}$ by adding leaves
to $\theta$,
we would have to add $j$ of those leaves.
We should choose $j$ instants (with possible repetitions) among the $k-2j$ instants in the path
$\theta$ to introduce these leaves. This would give us $\binom{k-2j+j-1}{j}
= \binom{k-j-1}{j}$ choices.
Then, we would have
$(n-(k-2j))^{\underline j} \sim n^j $ 
choices for the vertices associated to the leaves. This means that the weight
of $a_\theta - \mathbb E[a_\theta]$ is
\[\binom{k-j-1}{j}(n-(k-2j))^{\underline j}(-t)^j.\]
Note that the case where 
$k-2j = 0$ is not relevant
because our variables $\hat a_\psi$ are centered. We can write
\begin{align*} 
	n^{-k/2}&\sum_{\psi \in \mathcal C_{k}^{(n)}} \hat a_\psi
=
n^{-k/2}k\sum_{0\leq j \leq k/2}
\binom{k-j-1}{j}(-t)^j (n-(k-2j))^{\underline j}
\sum_{\theta \in \mathcal D_{k-2j}^{(n)}}  (a_\theta - \mathbb E[a_\theta])
								\\
&=
\sum_{0\leq j \leq k/2}
\frac{k}{k-2j}\binom{k-j-1}{j}(-t)^j \frac{(n-(k-2j))^{\underline j}}{n^j}
\bigg( n^{-(k-2j)/2}\sum_{\psi \in \mathcal C_{k-2j}^{(n)}}  (a_\psi 
-\mathbb E[a_\psi]) 	\bigg).
\end{align*}

\noindent
Notice then that whenever $k-2j \neq 0$,
\[\frac{k}{k-2j}\binom{k-j-1}{j} (-t)^j
=\frac{k}{k-j}\binom{k-j}{j} (-t)^j= 
\alpha_{k-2j}^{(k)}\] 
where $\left( \alpha_{k-2j}^{(k)} \right)_j$ 
are the coefficients of $P_k$ given in \eqref{eq:expression_coefs_alpha}.
Since $\frac{(n-(k-2j))^{\underline j}}{n^j}$ converges to $1$, the limit
of
$n^{-k/2}\sum_{\psi \in \mathcal C_{k}^{(n)}} \hat a_\psi$
coincides with the limit of 
\[\mathrm{Tr}\Big( P_k \Big(\frac{A_n}{\sqrt n} \Big) \Big)
- \E\Big[\mathrm{Tr}\Big( P_k \Big(\frac{A_n}{\sqrt n} \Big) \Big)\Big] = 
U_k^{(n)} - \E[U_k^{(n)} ]. \]
More precisely, their difference goes to zero in law. 
Since 
$\E[U_k^{(n)}]$
goes to zero by Proposition
\ref{prop:expectation_convergence}, we obtain that
\[\lim_{n \to \infty} (U_k^{(n)} - n^{-k/2}\hat{\mathcal A}_{c_k})_{k \geq 1}
= 0
\]
which completes the proof of Proposition
	\ref{proposition:convergence_to_gaussian}.

\end{proof}

\section{Proof of Theorem \ref{th:cv_mean_poly_univ}}
\label{sec:proof_mean_carac_poly}

For $I \subset [n]$, we denote by $A_I$ the submatrix of $A_{n, t}$ 
obtained by taking rows and columns with index in $I$. 
Since one can write
\begin{equation}
	\label{eq:secular_coefs_dev}
	\det \left((1+tz^2) - z\frac{A_{n,t}}{\sqrt{n}} \right) 
	= \sum_{k=0}^{n} (1+tz^2)^{n-k} (-z/ \sqrt{n})^k S_k^{(n)}
\end{equation}
where $S_k^{(n)} = \sum_{I \subset [n]: |I|=k} \det(A_I)$ is coefficient of $w^k$ 
in the polynomial $\det(1+w A)$, 
the expectation $E[f_n(z)]$ depends only on $t$. 
It can be written in terms
of Hermite polynomials, see Proposition \ref{prop:mean_poly} below. 
We will not need this since
we can study its asymptotic behavior by considering
$A_{n,t}$ sampled from the Elliptic Ginibre ensemble of parameter $t$. \\
\\
Recall that if $ (X_n)_{n \geq 1}$ is a sequence of 
uniformly integrable random variables which converge in law to $X$, then 
$ \lim_{n \rightarrow \infty} \E[X_n] = \E [X]$. Since the second order moment 
of $\{ f_{n} \}_{n \geq 1}$ is uniformly bounded on compact subsets by Proposition 
\ref{proposition:uniform_control}, the family is uniformly integrable and therefore
 for $z \in \D$,
\begin{equation} \label{eq:ExpPolConvergence}
	\lim_{n \rightarrow \infty} \E[f_{n}(z)] = \E[e^{-F_t(z)}]
\end{equation}
with
\begin{equation*}
	F_t(z) = \sum_{k \geq 1} X_{k} \frac{z^k}{\sqrt{k}}
\end{equation*}
for a family $( X_{k} )_{k \geq 1}$ of independent Gaussian random variables 
satisfying $\E[X_{k}] = 0$, $\E[X_{k}^2] = t^k$ and $\E[|X_{k}|^2] = 1$.
Since $|\E[f_{n}(z)]| \leq \sqrt{\E[|f_{n}(z)|^2]} $
and since $\E[|f_{n}(z)|^2]$ is uniformly bounded on compact sets
by Proposition 
\ref{proposition:uniform_control},
$\mathbb E[f_{n}(z)]$ is a precompact sequence of holomorphic functions
by Montel's theorem. This implies that the convergence
in \eqref{eq:ExpPolConvergence} is uniform on compact sets.
It is enough to calculate
 \[\mathbb E[e^{-F_t(z)}]
 = \mathbb E\big[e^{-  \sum_{k \geq 1} X_{k} \frac{z^k}{\sqrt{k}}}\big]
 =
e^{  \frac{1}{2}\sum_{k \geq 1} \mathbb E[X_{k}^2] \frac{z^{2k}}{k}}
=
e^{  \frac{1}{2}\sum_{k \geq 1} t^k \frac{z^{2k}}{k}}
=\sqrt{1-t z^2}
 \]
which completes the proof of Theorem \ref{th:cv_mean_poly_univ}. \\

\noindent
For the sake of completeness, we provide an explicit expression of the 
expectation $\E[f_n(z)]$ in Proposition \ref{prop:mean_poly}.

\begin{proposition}[Mean characteristic polynomial]
	\label{prop:mean_poly}
	Let $A_{n,t}$ be a random matrix as in Theorem \ref{th:cv_mean_poly_univ}.
	Then, for every $z \in \D$, 
	\begin{equation}
        \label{eq:mean_hermite_expression}
        \E [f_{n,t}(z)] =  \ed^{-ntz^2/2} \left( \sqrt{\frac{t}{n}} z \right)^{n} 
         H_n \left( \sqrt{\frac{n}{t}} \left( \frac{1}{z} + tz \right) \right).
    \end{equation}
\end{proposition}

\begin{remark}[Universality of the expectation]
	\label{rem:UnivExp}
	Notice that the expectations involved in $\E[f_n(z)]$ 
	only depend on $\E[a_{1,2} a_{2,1}] = t$. 
	Therefore, $\E [f_{n,t}(z)]$ can be obtained by 
	considering EGE matrices for the same $t$ 
	and is related to the kernel
	when we see the eigenvalues as 
	a determinantal process.
	Namely, if $(Z_1,\dots,Z_n) \sim 
	\frac{1}{\mathcal Z}\prod_{i<j}^n|z_j - z_i|^2
	\mathrm d \mu^{\otimes_n}(z_1,\dots,z_n)$, then
	\[\mathbb E\big[\prod_{i=1}^n (z-Z_i) \big] = p_n(z),\]
	where $p_n$ is the monic orthogonal polynomial
	of degree $n$ with respect to $\mu$.
	\end{remark}

\noindent
For convenience of the reader, we prefer to give a more
direct proof of
Proposition \ref{prop:mean_poly}. Recall that 
$S_k^{(n)} = \sum_{I \subset [n]: |I|=k} \det(A_I)$ are the coefficients 
in \eqref{eq:secular_coefs_dev}.
	
	\begin{lemma}[Mean coefficient]
	\label{lem:expectation_secular}
		For $1 \leq k \leq n$,
		\begin{equation}
			\E[S_k^{(n)} ] = \begin{cases}
			&0 \text{ if } k \in 2\N +1 \\
			& \binom{n}{k} (k-1)!! (-t)^{k/2}  \text{ if } k \in 2 \N.
		\end{cases}
	\end{equation}
	where $(l)!! = l \cdot (l-2) \dots 3 \cdot 1$ for $l \in 2\N +1$.
	\end{lemma}
	
	\begin{proof}
		Let $I \subset [n]$ such that $|I| = k$. Then, 
		\begin{align*}
			\E [\det(A_I)] &= \sum_{\sigma} (-1)^{\sigma} \E \prod_{i \in I} a_{i, \sigma(i)}.
		\end{align*}
		where the sum is over permutations of $I$. The expectation is non-zero if and only if 
		$\sigma$ is a product of transpositions since each term is centered and $(a_{i,j}, a_{j,i})$ 
		is independent of the family $ \{ (a_{l,k}, a_{k,l}), \{k,l\} \neq \{i,j\} \}$. Thus, $I$ has to be of 
		even cardinal so that $k=2l$ for some $l \geq 1$. There are $(2l-1)!! = (2l-1) (2l-3) \dots 1$ 
		permutations that are product of transpositions since they are in bijection with 
		pairings of $2l$ elements. Each such permutation gives a contribution of 
		$\left( \E[a_{1,2}a_{2,1}] \right)^{l} = t^{l} $ and has a signature of $(-1)^{l}$. Therefore, 
			\begin{align*}
			\E [\det(A_I)] &= (2l-1)!! (-t)^{l},
		\end{align*}
		which only depends on the cardinal of $I$. Thus, 
		\begin{equation*}
			\E[S_{2l}^{(n)} ] =  \binom{n}{2l} (2l-1)!! (-t)^{l} 
		\end{equation*}
	\end{proof}
	
	\begin{proof}[Proof of Proposition \ref{prop:mean_poly}]
		Let $z \in \D$. Using \eqref{eq:secular_coefs_dev}, Lemma \ref{lem:expectation_secular} and 
		$\frac{(2k-1)!!}{(2k)!} = \frac{1}{2^{k} k!}$,
		\begin{align*}
			\E [f_{n,t}(z)] &=  \ed^{-ntz^2/2}  \sum_{k=0}^{\lfloor n/2 \rfloor} 
			\binom{n}{2k} (1+tz^2)^{n-2k} \frac{z^{2k}}{n^{k}}(2k-1)!! (-t)^{k} \\
			 &=  \ed^{-ntz^2/2}  \left( \sqrt{\frac{t}{n}} z \right)^{n} \sum_{k=0}^{\lfloor n/2 \rfloor}
			  \frac{(-1)^{k} n!}{2^{k} (n-2k)! k!} \left( \sqrt{\frac{n}{t}} \left( \frac{1}{z} + tz \right) \right)^{n -2k } \\
			&=  \ed^{-ntz^2/2}  \left( \sqrt{\frac{t}{n}} z \right)^{n} 
			H_n \left( \sqrt{\frac{n}{t}} \left( \frac{1}{z} + tz \right) \right).
		\end{align*}
	\end{proof}

\noindent
Note that from Proposition \ref{prop:mean_poly}, 
one can derive the result of Theorem \ref{th:cv_mean_poly_univ} 
by using uniform asymptotics of Hermite polynomials. 
In particular, this approach does not require the use 
of Proposition \ref{proposition:uniform_control}. 
Since this alternative method involves more intricate computations, we opted to prove 
Theorem \ref{th:cv_mean_poly_univ} using the approach presented in the beginning of 
this section.

\printbibliography

\end{document}